\numberwithin{equation}{section}
\newtheorem{theorem}{Theorem}[section]
\newtheorem{lemma}[theorem]{Lemma}
\newtheorem{corollary}[theorem]{Corollary}
\theoremstyle{definition}
\newtheorem{definition}[theorem]{Definition}
\newtheorem{observation}[theorem]{Observation}
\newtheorem{def-prop}[theorem]{Definition-Proposition}
\newtheorem{example}[theorem]{Example}
\newtheorem*{acknowledgement}{Acknowledgement}
\newtheorem{notation}[theorem]{Notation}
\newtheorem*{Mysketch}{Sketch of proof} 
  {\pushQED{\qed}\begin{Mysketch}}
  {\popQED\end{Mysketch}}
\DeclareMathOperator{\Tor}{Tor}
\DeclareMathOperator{\reg}{reg}
\newcommand{\F}{\mathcal{F}}
\newcommand{\E}{\mathcal{E}}
\newcommand{\NN}{{\mathbb N}}
\def\E{{\mathcal E}}
\def\F{{\mathcal F}}
\def\1{{\bf 1}}
\def\0{{\bf 0}}
\begin{document}

\title{Regularity of powers of edge ideals: from local properties to global bounds}

\author{Arindam Banerjee}
\address{Ramakrishna Mission Vivekananda Educational and Research Institute, Belur, West Bengal, India}
\email{123.arindam@gmail.com}
\urladdr{https://http://maths.rkmvu.ac.in/~arindamb/}

\author{Selvi Kara Beyarslan}
\address{University of South Alabama, Department of Mathematics and Statistics, 411 University Boulevard North, Mobile, AL 36688-0002, USA}
\email{selvi@southalabama.edu}
\urladdr{https://sites.google.com/southalabama.edu/selvikara/home}

\author{Huy T\`ai H\`a}
\address{Tulane University \\ Department of Mathematics \\
6823 St. Charles Ave. \\ New Orleans, LA 70118, USA}
\email{tha@tulane.edu}
\urladdr{http://www.math.tulane.edu/$\sim$tai/}

\begin{abstract} Let $I = I(G)$ be the edge ideal of a graph $G$. We give various general upper bounds for the regularity function $\reg I^s$, for $s \ge 1$, addressing a conjecture made by the authors and Alilooee. When $G$ is a gap-free graph and locally of regularity 2, we show that $\reg I^s = 2s$ for all $s \ge 2$. This is a weaker version of a conjecture of Nevo and Peeva. Our method is to investigate the regularity function $\reg I^s$, for $s \ge 1$, via local information of $I$.
\end{abstract}

\maketitle


\section{Introduction}

During the last few decades, studying the regularity of powers of homogeneous ideals has evolved to be a central research topic in algebraic geometry and commutative algebra. This research program began with a celebrated theorem, proved independently by Cutkosky-Herzog-Trung \cite{CHT} and Kodiyalam \cite{Ko}, which stated that for a homogeneous ideal $I$ in a standard graded algebra over a field, the regularity function $\reg I^s$ is asymptotically a linear function (see also \cite{BCH, TW}). Though despite much effort from many researchers, this asymptotic linear function is far from being well understood. In this paper, we investigate this regularity function for edge ideals of graphs. We shall explore several classes of graphs for which this regularity function can be explicitly described or bounded in terms of combinatorial data of the graphs. This problem has been studied recently by many authors (cf. \cite{AB, ABS, Ba, BBH, BHT, Erey1, Erey2, JNS, JS, JS3, MFY, NFY}).

Our initial motivation for this paper is the general belief that global conclusions often could be derived from local information. Particularly, local conditions on an edge ideal $I$ (i.e., conditions on $\reg (I:x)$, for $x \in V(G)$) should give a global understanding of the function $\reg I^s$, for $s \ge 1$. Our motivation furthermore comes from the following conjectures (see \cite{BBH, Nevo, NP}), which provide a general upper bound for the regularity function of edge ideals, and describe a special class of edge ideals whose powers (at least 2) all have linear resolutions.

\medskip
\noindent{\bf Conjecture A} (Alilooee-Banerjee-Beyarslan-H\`a). Let $G$ be a simple graph with edge ideal $I = I(G)$. For any $s \ge 1$, we have
$$\reg I^s \le 2s+\reg I - 2.$$

\noindent{\bf Conjecture B} (Nevo-Peeva). Let $G$ be a simple graph with edge ideal $I = I(G)$. Suppose that $G$ is gap-free and $\reg I = 3$. Then, for all $s \ge 2$, we have
$$\reg I^s = 2s.$$

Our aim is to investigate Conjectures A and B using the local-global principle. Finding general upper bounds for $\reg I(G)^s$ has received a special interest and generated a large number of papers during the last few years. This partly thanks to a general lower bound for $\reg I(G)^s$ given in \cite{BHT}; particularly, if $\nu(G)$ denotes the induced matching number of $G$ then, for any $s \ge 1$, we have
\begin{align}
\reg I(G)^s \ge 2s + \nu(G)-1. \label{eq.genlowerbound}
\end{align}
Our first main result gives a weaker general upper bound for $\reg I(G)^s$ than that of Conjecture A. The motivation of this result comes from an upper bound for the regularity of $I(G)$ given by Adam Van Tuyl and the last author, namely $\reg I(G) \le \beta(G) + 1$, where $\beta(G)$ denotes the matching number of $G$ (see \cite{HVT2008}). We prove the following theorem.

\noindent{\bf Theorem \ref{thm.matching}.} Let $G$ be a graph with edge ideal $I = I(G)$, and let $\beta(G)$ be its matching number. Then, for all $s \ge 1$, we have
$$\reg I^s \le 2s+\beta(G)-1.$$

As a consequence of Theorem \ref{thm.matching}, for the class of Cameron-Walker graphs, where $\nu(G) = \beta(G)$, we have
$$\reg I^s = 2s+\nu(G)-1 \ \forall \ s \ge 1.$$

A graph $G$ is said to be \emph{locally of regularity at most $r-1$} if $\reg (I(G):x) \le r-1$ for all vertex $x$ in $G$. Note that, by \cite[Proposition 4.9]{CHHKTT}, if $G$ is locally of regularity at most $r-1$ then $\reg I(G) \le r$. In the local-global spirit, we reformulate Conjecture A to a slightly weaker conjecture as follows.

\medskip

\noindent{\bf Conjecture $\textbf{A}'$.} Let $G$ be a simple graph with edge ideal $I = I(G)$. Suppose that $G$ is locally of regularity at most $r-1$, for some $r \ge 2$. Then, for any $s \ge 1$, we have
$$\reg I^s \le 2s+r-2.$$

Our next main result proves Conjecture $\text{A}'$ for gap-free graphs. 

\noindent{\bf Theorem \ref{thm.gaplocal}.} Let $G$ be a simple graph with edge ideal $I = I(G)$.  Suppose that $G$ is gap-free and locally of regularity at most $r-1$, for some $r \ge 2$. Then, for any $s \ge 1$, we have
$$\reg I^s \le 2s+r-2.$$

It is an easy observation that if $I(G)^s$ has a linear resolution for some $s \ge 1$ then $G$ must be gap-free. Conjecture B serves as a converse statement to this observation, and has remained intractable. By applying the local-global principle, we prove a weaker statement, in which the condition $\reg I = 3$ is replaced by the condition that $G$ is locally linear (i.e., locally of regularity at most 2). Our main result toward Conjecture B is stated as follows.

\noindent{\bf Theorem \ref{thm.gaplocallin}.} Let $G$ be a simple graph with edge ideal $I = I(G)$. Suppose that $G$ is gap-free and locally linear. Then for all $s \ge 2$, we have
$$\reg I^s = 2s.$$

As a consequence of Theorem \ref{thm.gaplocallin}, we quickly recover a result of Banerjee, which showed that if $G$ is gap-free and cricket-free then $I(G)^s$ has a linear resolution for all $s \ge 2$ (see Corollary \ref{cor.Ba}).

We end the paper by exhibiting an evidence for Conjecture $\text{A}'$ at the first nontrivial value of $s$, i.e., $s = 2$, for all graphs.

\noindent{\bf Theorem \ref{thm.square}.} Let $G$ be a graph with edge ideal $I = I(G)$. Suppose that $G$ is locally of regularity at most $r-1$. Then, for any edge $e \in E(G)$, $\reg (I^2:e) \le r.$ Particularly, this implies that $\reg (I^2) \le r+2.$

Our paper is structured as follows. In the next section we give necessary notation and terminology. The reader who is familiar with previous work in this research area may want to proceed directly to Section \ref{chapter3}. In Section \ref{chapter3}, we discuss general upper bound for the regularity function, aiming toward Conjecture A. Theorem \ref{thm.matching}  is proved in this section. In Section \ref{chapter4}, we focus further on gap-free graphs, investigating both Conjectures $\text{A}'$ and B using the local-global principle. Theorems \ref{thm.gaplocal} and \ref{thm.gaplocallin} are proved in this section. We end the paper with Section \ref{chapter5}, proving Theorem \ref{thm.square} and discussing briefly how an effective bound on the regularity of $I(G)^2$ may give us information on the regularity of the second symbolic power $I(G)^{(2)}$. This gives a glimpse into future work on the regularity function of symbolic powers of edge ideals.

\begin{acknowledgement} Part of this work was done while the first named and the last named authors were visiting the Vietnam Institute for Advanced Study in Mathematics (VIASM). We would like to express our gratitude toward VIASM for its support and hospitality. The last named author is partially supported by Simons Foundation (grant \#279786) and Louisiana Board of Regents (grant \#LEQSF(2017-19)-ENH-TR-25). The authors thank Thanh Vu for pointing out a mistake in our first version of the paper.
\end{acknowledgement}

\section{Preliminaries}\label{chapter2}

In this section, we collect notations and terminology used in the paper. For unexplained notions, we refer the reader to standard texts \cite{BrHe, E, HH, MS, Stanley, V}.

\noindent{\bf Graph Theory.} Throughout the paper, $G$ shall denote a finite simple graph with vertex set $V(G)$ and edge set $E(G)$. A subgraph $G'$ of $G$ is called \emph{induced} if for any two vertices $u,v$ in $G'$, $uv \in E(G') \Leftrightarrow uv \in E(G)$. For a subset $W \subseteq V(G)$, we shall denote by $G_W$ the induced subgraph of $G$ over the vertices in $W$, and denote by $G-W$ the induced subgraph of $G$ on $V(G) \setminus W$. When $W = \{w\}$ consists of a single vertex, we also write $G-w$ for $G-\{w\}$. The \emph{complement} of a graph $G$, denoted by $G^c$, is the graph on the same vertex set $V(G)$ in which $uv \in E(G^c) \Leftrightarrow uv \not\in E(G)$.

\begin{definition} Let $G$ be a graph.
\begin{enumerate}
\item A \emph{walk} in $G$ is a sequence of (not necessarily distinct) vertices $x_1,x_2, \dots, x_n$ such that $x_ix_{i+1}$ is an edge for all $i=1,2,\dots,n.$ A \emph{circuit} is a \emph{closed} walk (i.e., when $x_1 \equiv x_n$).
\item A \emph{path} in $G$ is a walk whose vertices are distinct (except possibly the first and the last vertices).
\item A \emph{cycle} in $G$ is a closed path. A cycle consisting of $n$ distinct vertices is called an $n$-cycle and often denoted by $C_n.$
\item An \emph{anticycle} is the complement of a cycle.
\end{enumerate}
\end{definition}

A graph in which there is no induced cycle of length greater than 3 is called a \emph{chordal} graph. A graph whose complement is chordal is called a \emph{co-chordal} graph.

\begin{definition} Let $G$ be a graph.
\begin{enumerate}
\item A \emph{matching} in $G$ is a collection of disjoint edges. The \emph{matching number} of $G$, denoted by $\beta(G)$ is the maximum size of a matching in $G$.
\item An \emph{induced matching} in $G$ is a matching $C$ such that the induced subgraph of $G$ over the vertices in $C$ does not contain any edge other than those already in $C$. The \emph{induced matching number} of $G$, denoted by $\nu(G)$, is the maximum size of an induced matching in $G$.
\end{enumerate}
\end{definition}

\begin{definition}
Let $G$ be a graph.
\begin{enumerate}
\item Two disjoint edges $uv$ and $xy$ are said to form a \emph{gap} in $G$ if $G$ does not have an edge with one
endpoint in $\{u,v\}$ and the other in $\{x,y\}$.
\item If $G$ has no gaps then $G$ is called \emph{gap-free}. Equivalently, $G$ is gap-free if and only if $\nu(G) = 1$ (i.e., $G^c$ contains no induced $C_4$).
\end{enumerate}
\end{definition}

For any integer $n$, $K_n$ denotes the \emph{complete} graph over $n$ vertices (i.e., there is an edge connecting any pair of vertices). For any pair of integers $m$ and $n$, $K_{m,n}$ denotes the \emph{complete bipartite} graph; that is, a graph with a bipartition $(U,V)$ of the vertices such that $|U| = m, |V| = n$ and $E(K_{m,n}) = \{uv ~|~ u \in U, v \in V\}.$

\begin{definition} \quad
\begin{enumerate}
\item A graph isomorphic to $K_{1,3}$ is called a \emph{claw}. A graph without any induced claw is called a \emph{claw-free} graph. 
\item A graph isomorphic to the graph with vertex set $\{w_1,w_2,w_3,w_4,\\w_5\}$ and edge set $\{w_1w_3,w_2w_3,w_3w_4,w_3w_5,w_4w_5\}$ is called a \emph{cricket}. A graph without any induced cricket is called a \emph{cricket-free} graph.
\end{enumerate}
\end{definition}

\begin{observation}
A claw-free graph is cricket-free.
\end{observation}

\begin{notation} Let $G$ be a graph, let $u,v \in V(G)$, and let $e = xy \in E(G)$.
\begin{enumerate}
\item The set of vertices incident to $u$, the \emph{neighborhood} of $u$, is denoted by $N_G(u)$. Set $N_G[u] = N_G(u) \cup \{u\}$.
\item The set of vertices incident to an endpoint of $e$, the \emph{neighborhood} of $e$, is denoted by $N_G(e)$. Set $N_G[e] = N_G(e) \cup \{x,y\}.$
\item The \emph{degree} of $u$ is $\deg_G(u) = \big|N_G(u)\big|$. An edge is called a \emph{leaf} or a \emph{whisker} if any of its vertices has degree exactly 1.
\item The \emph{distance} between $u$ and $v$, denoted by $d(u,v)$, is the fewest number of edges that must be traversed to travel from $u$ to $v$ in $G$.
\end{enumerate}
\end{notation}

We can naturally extend these notions to get $N_G(W)$, $N_G[W]$, $N_G(\E)$ and $N_G[\E]$ for a subset of the vertices $W \subseteq V(G)$ or a subset of the edges $\E \subseteq E(G)$.

\begin{definition} Let $G$ be a graph.
\begin{enumerate}
\item A collection $W$ of the vertices in $G$ is called an \emph{independent set} if there is no edge connecting two vertices in $W$.
\item The \emph{independent complex} of $G$, denoted by $\Delta(G)$, is the simplicial complex whose faces are independent sets of $G$.
\end{enumerate}
\end{definition}

%
%

\noindent{\bf Commutative Algebra.} Let $G$ be a simple graph over the vertices $V(G) = \{x_1, \dots, x_n\}$. By abusing notation, we shall identify the vertices of $G$ with the variables in a polynomial ring $S = k[x_1, \dots, x_n]$, where $k$ is any infinite field. Particularly, we shall use $uv$ to denote both the edge $uv$ in $G$ and the monomial $uv$ in $S$ (the choice would be obvious from the context).

\begin{definition} Let $G$ be a graph over the vertices $V(G) = \{x_1, \dots, x_n\}$. The \emph{edge ideal} of $G$ is defined to be
$$I(G) = \langle xy ~|~ xy \in E(G)\rangle \subseteq S.$$
\end{definition}

Castelnuovo-Mumford regularity is \emph{the} invariant being investigated in this paper. We shall give a definition most suitable for our context.

\begin{definition}
Let $S$ be a standard graded polynomial ring over a field $k$. The \emph{regularity} of a finitely generated graded $S$ module $M$, written as $\reg M$, is given by $$\reg(M):= \max \{j-i|\Tor_{i} (M,k)_j \neq 0 \}.$$
\end{definition}

For a graph $G$, we shall use $\reg I(G)$ and $\reg G$ interchangeably. The following simple bound is often used without references.

\begin{lemma}[\protect{See \cite[Lemma 3.1]{HaSurvey}}] \label{lem.indsubgraph}
Let $G$ be a simple graph and let $H$ be an induced subgraph of $G$. Then
$$\reg I(H) \le \reg I(G).$$
Particularly, for any vertex $v \in V(G)$, we have that $\reg I(G - v) \le \reg I(G).$
\end{lemma}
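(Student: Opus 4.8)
The plan is to pass to the independence complex and reduce the whole statement to a comparison of reduced simplicial homology groups via Hochster's formula; the ``particularly'' clause is then just the special case $W = V(G)\setminus\{v\}$, so it suffices to treat a general induced subgraph $H = G_W$. First I would record the key translation: $I(G)$ is the Stanley--Reisner ideal of the independence complex $\Delta(G)$, and for an induced subgraph one has $\Delta(H) = \Delta(G)_W$, the subcomplex of $\Delta(G)$ induced on $W$. This identity is exactly where the hypothesis is used: a subset of $W$ is a face of $\Delta(G)_W$ precisely when it is an independent set of $G$ lying in $W$, and because $H$ is induced these are precisely the independent sets of $H$.

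Next I would invoke Hochster's formula, which computes the multigraded Betti numbers of $S/I_\Delta$ from the reduced homology of induced subcomplexes and, after collecting the relevant degree shift, yields
$$\reg \frac{S}{I_\Delta} \;=\; \max\Big\{\, i \ \Big|\ \widetilde{H}_{i-1}(\Delta_U;k)\neq 0 \text{ for some } U\subseteq V(G)\,\Big\}.$$
Applying this to both $G$ and $H$, I observe that every induced subcomplex of $\Delta(H)=\Delta(G)_W$ has the form $(\Delta(G)_W)_U = \Delta(G)_U$ with $U\subseteq W$. Hence the maximum computing $\reg(S/I(H))$ runs over a subfamily of the subsets $U\subseteq V(G)$ computing $\reg(S/I(G))$, so $\reg(S/I(H))\le \reg(S/I(G))$; adding $1$ to both sides via $\reg I = \reg(S/I)+1$ gives $\reg I(H)\le\reg I(G)$.

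The only real care needed is bookkeeping---pinning down the homological index in Hochster's formula and checking the identity $\Delta(H)=\Delta(G)_W$, which genuinely fails for non-induced subgraphs. As an alternative, I would note a purely algebraic route: deleting the vertices of $V(G)\setminus W$ one at a time and, for each single deletion, using the short exact sequence
$$0 \to \big(S/(I:v)\big)(-1) \xrightarrow{\ \cdot v\ } S/I \to S/(I,v) \to 0,$$
together with $S/(I(G),v)\cong S_{G-v}/I(G-v)$. This reduces each step to the standard inequality $\reg(S/(I:v))\le \reg(S/I)$, but the topological argument is cleaner and self-contained here.
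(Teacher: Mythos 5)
Your proof is correct and is essentially the argument behind the paper's citation: the lemma is stated without proof, deferring to \cite[Lemma 3.1]{HaSurvey}, and the standard proof there is exactly your Hochster-formula comparison, using that $\Delta(G_W)=\Delta(G)_W$ for induced subgraphs and that the induced subcomplexes of $\Delta(G)_W$ form a subfamily of those of $\Delta(G)$. Your alternative route via the short exact sequence is also fine, granting the monomial-ideal inequality $\reg (I:v)\le \reg I$, which the paper itself invokes elsewhere via \cite[Corollary 3.2]{Herzog}.
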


A standard use of short exact sequences yields the following result, which we shall also often use.

\begin{lemma}\label{exact}
Let $I \subseteq S$ be a monomial ideal, and let $m$ be a monomial of degree $d$.  Then
$$\reg I \le \max\{ \reg (I : m) + d, \reg (I,m)\}.$$
Moreover, if $m$ is a variable appearing in I, then $\reg I$ is {\it equal} to one of the right-hand-side terms.
\end{lemma}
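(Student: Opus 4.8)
The plan is to exhibit $I$, $(I:m)$ and $(I,m)$ as the three terms of a single short exact sequence and then invoke the standard behaviour of regularity under short exact sequences; the genuinely delicate point will be the final ``moreover'' clause, which must use that $m$ is a variable.

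First I would record the key sequence. Multiplication by $m$ defines an $S$-linear map $S \to S/I$ sending $f \mapsto \overline{mf}$; its kernel is $\{f : mf \in I\} = (I:m)$ and its image is $(mS+I)/I$. Keeping track of the degree shift caused by multiplying by a form of degree $d$, this yields
$$0 \longrightarrow (S/(I:m))(-d) \stackrel{\cdot m}{\longrightarrow} S/I \longrightarrow S/(I,m) \longrightarrow 0.$$
Next I would apply the well-known estimate $\reg B \le \max\{\reg A, \reg C\}$ for a short exact sequence $0 \to A \to B \to C \to 0$ (read off from the long exact sequence of $\Tor(-,k)$), together with the two bookkeeping identities $\reg(M(-d)) = \reg M + d$ and $\reg(S/J) = \reg J - 1$ for a proper nonzero ideal $J$. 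Applied to the sequence above with $A = (S/(I:m))(-d)$ and $C = S/(I,m)$, this gives $\reg(S/I) \le \max\{\reg(S/(I:m)) + d, \reg(S/(I,m))\}$, and rewriting each quotient in terms of its defining ideal produces exactly $\reg I \le \max\{\reg(I:m)+d, \reg(I,m)\}$. The handful of degenerate cases (for instance $m \in I$, or $(I:m)=S$) I would dispose of directly.

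For the moreover clause, where $m=x$ is a variable and $d=1$, I would return to the long exact sequence in $\Tor(-,k)$ and track the extremal graded pieces. Writing $r = \reg(S/I)$ and choosing $(i_0,j_0)$ with $j_0-i_0 = r$ and $\Tor_{i_0}(S/I,k)_{j_0}\neq 0$, the three-term exact strand $\Tor_{i_0}(A,k)_{j_0}\to \Tor_{i_0}(S/I,k)_{j_0}\to \Tor_{i_0}(S/(I,x),k)_{j_0}$ forces one of the two outer terms to be nonzero at the same slope $r$, so at least one of $S/(I:x)$ (shifted) and $S/(I,x)$ has regularity at least $r$. The main obstacle is to promote this inequality to an equality: the symmetric short-exact-sequence bounds alone permit $\reg(S/I)$ to fall strictly below both ends, so equality genuinely requires the hypothesis that $x$ is a variable. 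Concretely, the delicate step is to rule out a cancellation in which a connecting homomorphism annihilates the extremal class at slope $r$ in both end terms; I expect to control this using the special structure of multiplication by the variable $x$ — in particular that $S/(I,x)$ is a module over $S/(x)$ and that the comparison maps preserve the internal grading — so that the extremal Betti number of $S/I$ is genuinely inherited from one of the two ends, pinning $\reg I$ to one of the two right-hand terms. This bookkeeping at the top slope is where the real work lies.
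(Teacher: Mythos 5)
The first half of your argument is correct, and it is precisely the ``standard use of short exact sequences'' that the paper invokes without writing out: the sequence
$0 \to (S/(I:m))(-d) \to S/I \to S/(I,m) \to 0$
given by multiplication by $m$, together with the bound $\reg B \le \max\{\reg A, \reg C\}$ and the usual translations $\reg M(-d) = \reg M + d$, $\reg(S/J) = \reg J - 1$, yields the stated inequality, and the degenerate cases are indeed harmless.

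The ``moreover'' clause, however, contains a genuine gap: you defer exactly the step that requires a new idea. Your extremal-strand argument --- choosing $(i_0,j_0)$ with $j_0-i_0=\reg(S/I)$ and looking at the three-term strand of $\Tor(-,k)$ --- only shows that one of the two outer modules has regularity at least $\reg(S/I)$, which is logically the same content as the inequality you already proved; it cannot force equality, and as you yourself observe, the symmetric short-exact-sequence bounds genuinely allow $\reg(S/I)$ to fall strictly below both ends (this happens exactly when $\reg A = \reg C + 1$, a cancellation the long exact sequence permits). The missing ingredient is not finer bookkeeping at the top slope but a monomial-specific input: for a monomial ideal $I$ and a variable $x$ one has $\reg(I,x) \le \reg I$. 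This follows from the $\ZZ^n$-grading: in multidegrees whose $x$-coordinate is zero, the Koszul homology of $S/I$ coincides with that of $S'/\bar{I}$ over $S' = S/(x)$, where $\bar{I}$ is the image of $I$; hence every multigraded Betti number of $S/(I,x)$ occurs among those of $S/I$. Granting this, equality is formal. Write $A=(S/(I:x))(-1)$, $B=S/I$, $C=S/(I,x)$, so that $\reg C \le \reg B$. If $\reg A \le \reg B$, then $\reg B \le \max\{\reg A,\reg C\}$ forces the maximum to be attained at value $\reg B$, so $\reg B$ equals $\reg A$ or $\reg C$. If $\reg A > \reg B$, the companion bound $\reg A \le \max\{\reg B, \reg C + 1\}$ gives $\reg B + 1 \le \reg A \le \reg C + 1 \le \reg B + 1$, whence $\reg C = \reg B$. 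Either way $\reg I$ equals one of $\reg(I:x)+1$ and $\reg(I,x)$, as claimed. Your appeal to ``$S/(I,x)$ is a module over $S/(x)$ and the comparison maps preserve the grading'' does not by itself supply this restriction inequality, and without it the cancellation you identified cannot be excluded --- indeed, for a general homogeneous ideal and a general linear form the equality statement fails, so any correct argument must, as here, use the monomial structure rather than only the exactness of the sequence.
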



\begin{definition} Let $r \in \NN$. A graph $G$ is said to be \emph{locally of regularity $\le r$} if for every vertex $x \in V(G)$, we have $\reg (I(G):x) \le r$. A graph which is locally of regularity $\le 2$ is called \emph{locally linear}.
\end{definition}

\noindent{\bf Auxiliary Results.} We next recall a few results that are useful for our purpose.

We shall make use of the following characterization for edge ideals of graphs with linear resolutions. This characterization was first given in topological language by Wegner \cite{Wegner} and later, independently, by Lyubeznik \cite{L} and Fr\"oberg  \cite{Fr} in monomial ideals language.

\begin{theorem}[\protect{See \cite[Theorem 1]{{Fr}}}] \label{thm.regtwo}
Let $G$ be a simple graph. Then $\reg I(G) = 2$ if and only if $G$ is a co-chordal graph.
\end{theorem}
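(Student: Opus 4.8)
The plan is to compute the graded Betti numbers of $I(G)$ combinatorially and translate the numerical condition $\reg I(G) = 2$ into a topological statement about the complement graph $G^c$. First I would record that $I(G)$ is the Stanley--Reisner ideal of the independence complex $\Delta(G)$, and that $\reg I(G) = \reg (S/I(G)) + 1$. By Hochster's formula,
\[
\beta_{i,j}(S/I(G)) = \sum_{W \subseteq V(G),\, |W| = j} \dim_k \widetilde{H}_{j-i-1}(\Delta(G)_W; k),
\]
and since $\Delta(G)_W = \Delta(G_W)$, the invariant $\reg(S/I(G))$ is the largest $\ell + 1$ for which some induced subgraph $G_W$ has $\widetilde{H}_\ell(\Delta(G_W); k) \ne 0$. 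As $I(G)$ is generated in degree $2$, we always have $\reg(S/I(G)) \ge 1$, so the identity $\reg I(G) = 2$ is equivalent to the vanishing $\widetilde{H}_\ell(\Delta(G_W); k) = 0$ for every $W \subseteq V(G)$ and every $\ell \ge 1$. The crucial observation completing this reduction is that an independent set of $G$ is exactly a clique of $G^c$, so $\Delta(G)$ is the clique (flag) complex of $H := G^c$, and $\Delta(G_W)$ is the clique complex of the induced subgraph $H_W$. Thus the theorem becomes: the clique complex of every induced subgraph of $H$ has vanishing reduced homology in all positive degrees if and only if $H$ is chordal.

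For the direction assuming $H = G^c$ is chordal, I would argue that the clique complex of a chordal graph has the homotopy type forcing this vanishing. Since every induced subgraph of a chordal graph is again chordal, and since the clique complex of a disconnected graph is the disjoint union of the clique complexes of its components, it suffices to show that the clique complex of a connected chordal graph is contractible. Here I would use a perfect elimination ordering: a chordal graph admits a simplicial vertex $v$, i.e. one whose neighborhood $N_H(v)$ is a clique. Then the clique complex $X$ of $H$ is the union of the clique complex $X'$ of $H - v$ with the simplex spanned by $N_H[v]$, and these two contractible pieces meet in the simplex spanned by $N_H(v)$, which is again contractible; hence $X$ is homotopy equivalent to $X'$. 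Inducting on the number of vertices collapses $X$ to a point, so $\widetilde{H}_\ell(X; k) = 0$ for all $\ell \ge 1$, and the reduction gives $\reg I(G) = 2$.

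For the converse I would prove the contrapositive: if $H = G^c$ is not chordal, then it contains an induced cycle $C_n$ with $n \ge 4$ on some vertex set $W$. Because $C_n$ has no triangles for $n \ge 4$, the clique complex of $H_W = C_n$ coincides with the $1$-dimensional cycle itself, which is homeomorphic to $S^1$; consequently $\widetilde{H}_1(\Delta(G_W); k) = k \ne 0$. Feeding $\ell = 1$ and $j = |W| = n$ into Hochster's formula produces a nonzero Betti number $\beta_{i,j}(S/I(G))$ with $j - i = 2$, whence $\reg(S/I(G)) \ge 2$ and $\reg I(G) \ge 3 \ne 2$.

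The main obstacle is the forward direction, specifically establishing the contractibility (equivalently, the vanishing of higher reduced homology) of clique complexes of chordal graphs; the Hochster-formula bookkeeping and the complement/clique dictionary are routine, but the homotopy argument requires the existence of a simplicial vertex and a careful gluing (Mayer--Vietoris) analysis of the star of that vertex. One must also dispose of the mild degenerate cases (graphs with no edges, or disconnected complements) so that the equivalence is stated cleanly.
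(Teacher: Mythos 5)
The paper does not actually prove this statement: it is quoted as Fr\"oberg's theorem (\cite[Theorem 1]{Fr}, with the topological antecedent due to Wegner and an independent proof by Lyubeznik), so there is no in-paper argument to compare against; your proposal must stand on its own, and it essentially does, following the standard modern proof. Your Hochster-formula bookkeeping is correctly indexed ($\widetilde{H}_{j-i-1}$ for $\beta_{i,j}(S/I)$), the dictionary identifying $\Delta(G)$ with the clique complex of $H = G^c$ and its compatibility with induced subgraphs is right, and the converse direction is complete: an induced $C_n$ ($n \ge 4$) in $G^c$ is triangle-free, so its clique complex is the circle, giving $\widetilde{H}_1 \ne 0$ and hence $\reg I(G) \ge 3$. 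One step in the forward direction should be made explicit: your induction is phrased for \emph{connected} chordal graphs, so after deleting the simplicial vertex $v$ you need $H - v$ to remain connected. This is true --- since $N_H(v)$ is a clique, any path through $v$ reroutes along an edge between two of its neighbors --- but it is not automatic and you never say it. Alternatively, you can avoid the issue entirely by inducting on the homological statement ``$\widetilde{H}_\ell = 0$ for all $\ell \ge 1$'' over all chordal graphs: writing $X = X' \cup \sigma$ with $\sigma$ the simplex on $N_H[v]$ and $X' \cap \sigma$ the (contractible, when $v$ is non-isolated) simplex on $N_H(v)$, Mayer--Vietoris gives $\widetilde{H}_\ell(X) \cong \widetilde{H}_\ell(X')$ for all $\ell \ge 1$ with no connectivity hypothesis, and the isolated-vertex case is trivial. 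Finally, as you yourself flag, the edgeless case ($I(G) = (0)$, while $G^c$ is complete and hence chordal) must be excluded for the stated equivalence to be literally true; the paper implicitly assumes $G$ has at least one edge. With these small points made explicit, your argument is a complete and correct proof of the cited theorem.
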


%
%
%

In the study of powers of edge ideals, Banerjee developed the notion of even-connection and gave an important inductive inequality in \cite{Ba}. This inductive method has proved to be quite powerful, which we shall make use of often.

\begin{theorem}\label{thm.inductive}
For any finite simple graph $G$ and any $s\geq 1$, let the set of minimal monomial generators of $I(G)^s$ be $\{m_1,....,m_k\}$, then $$\reg I(G)^{s+1} \leq \max \{ \reg (I(G)^{s+1} : m_l)+2s, 1\leq l \leq k, \reg I(G)^s\}.$$
\end{theorem}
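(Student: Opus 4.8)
The plan is to compare $I^{s+1}$ with $I^s$ by peeling off the generators $m_1,\dots,m_k$ of $I^s$ one at a time and applying Lemma \ref{exact} at each step. Since $I=I(G)$ is generated in degree $2$, every minimal generator $m_i$ of $I^s$ is a monomial of degree $2s$. I would set $J_0=I^{s+1}$ and, for $1\le i\le k$, let
$$J_i=I^{s+1}+(m_1,\dots,m_i).$$
Because $\{m_1,\dots,m_k\}$ generates $I^s$ and $I^{s+1}\subseteq I^s$, this ascending chain terminates at $J_k=I^s$. Thus we obtain a filtration interpolating between the ideal $I^{s+1}$ whose regularity we want to bound and $I^s$, whose regularity already appears on the right-hand side of the claim.

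Next I would apply Lemma \ref{exact} to the ideal $J_{i-1}$ with the degree-$2s$ monomial $m_i$. Since $(J_{i-1},m_i)=J_i$, this yields, for each $i$,
$$\reg J_{i-1}\le \max\{\reg(J_{i-1}:m_i)+2s,\ \reg J_i\}.$$
Chaining these inequalities from $J_0=I^{s+1}$ down to $J_k=I^s$ produces
$$\reg I^{s+1}\le \max\Big\{\max_{1\le i\le k}\big(\reg(J_{i-1}:m_i)\big)+2s,\ \reg I^s\Big\}.$$
Equivalently, one may argue module-theoretically: the short exact sequence $0\to I^{s+1}\to I^s\to I^s/I^{s+1}\to 0$ reduces the problem to bounding $\reg(I^s/I^{s+1})$, and the natural filtration of $I^s/I^{s+1}$ by the cyclic submodules generated by the images of the $m_i$ produces precisely the same colon ideals $(J_{i-1}:m_i)$.

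The remaining step, which I expect to be the main obstacle, is to replace the intermediate colons $(J_{i-1}:m_i)$ by the full colons $(I^{s+1}:m_i)$ appearing in the statement. This is not a formal consequence of the filtration: one has
$$(J_{i-1}:m_i)=(I^{s+1}:m_i)+\big(m_j/\gcd(m_j,m_i)\ :\ 1\le j<i\big),$$
and a direct computation (for instance with $I=(x_1x_2,x_2x_3)$ and $s=1$) shows this can be strictly larger than $(I^{s+1}:m_i)$. What must be established is the regularity comparison $\reg(J_{i-1}:m_i)\le \max_{1\le l\le k}\reg(I^{s+1}:m_l)$, and this is where the special structure of edge ideals enters rather than general homological algebra. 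Here I would invoke the even-connection description of the colon ideals $(I^{s+1}:m)$ of a power of an edge ideal by a monomial of $I^s$: such colons are generated in degree at most $2$, namely by $I$ together with edges joining even-connected vertices. I would then analyze how the extra monomial generators $m_j/\gcd(m_j,m_i)$ interact with this structure. Controlling these extra generators, that is, showing that they cannot force the regularity above $\max_l\reg(I^{s+1}:m_l)$, is the delicate point; once this comparison is in place, substituting it into the chained inequality gives exactly the claimed bound, and the filtration together with the bookkeeping of degrees is routine by comparison.
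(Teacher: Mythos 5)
Your filtration $J_i = I^{s+1} + (m_1,\dots,m_i)$ combined with Lemma \ref{exact} is exactly the standard skeleton of the argument (it is how Banerjee proves this statement in \cite{Ba}; the present paper offers no proof of its own and simply quotes the result), and you are right that $(J_{i-1}:m_i)=(I^{s+1}:m_i)+\big(m_j/\gcd(m_j,m_i) : j<i\big)$ produces extra generators that need not lie in $(I^{s+1}:m_i)$. But that is precisely where your proposal stops: you state that the comparison $\reg(J_{i-1}:m_i)\le \max_{1\le l\le k}\reg(I^{s+1}:m_l)$ ``must be established'' and that controlling the extra generators ``is the delicate point,'' without establishing it. Since the chaining of Lemma \ref{exact} and the degree bookkeeping are routine, this unproved comparison is the entire mathematical content of the theorem; as written, your argument only yields the weaker bound $\reg I^{s+1}\le\max\{\max_{i}\reg(J_{i-1}:m_i)+2s,\ \reg I^s\}$, which is not the statement being proved.

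To see that the missing step is not cosmetic, take $G=P_5$ with edges $x_1x_2,\dots,x_4x_5$ and $s=2$, and consider $m_i=x_2x_3^2x_4=(x_2x_3)(x_3x_4)$ preceded by $m_j=x_1x_2x_4x_5=(x_1x_2)(x_4x_5)$. Then $m_j/\gcd(m_j,m_i)=x_1x_5$, a degree-two monomial on an independent set which, by the even-connection criterion of Theorem \ref{even_connec_equivalent}, does not lie in $(I^3:m_i)$; whether it is subsumed by a variable in $(J_{i-1}:m_i)$ depends on which other generators of $I^2$ (here $x_1x_2x_3x_4$ and $x_2x_3x_4x_5$, which contribute $x_1$ and $x_5$) happen to precede $m_i$ in the chosen order. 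So closing the gap requires genuine input: for instance, one can show that for every variable $u$ dividing $m_j/\gcd(m_j,m_i)$ there is a generator $m_{j'}$ of $I^s$, built from an edge through $u$ and $s-1$ edges of $m_i$, with $m_{j'}/\gcd(m_{j'},m_i)$ equal to $u$ or to an edge, so that modulo $I\subseteq (I^{s+1}:m_i)$ the extra generators reduce to variables, after which the regularity does not increase by \cite[Corollary 3.2]{Herzog} (the same device the paper uses in the proof of Theorem \ref{thm.hierarchy}); this also forces one to be careful about the order in which the $m_i$ are peeled off. None of this appears in your proposal, so what you have is a correct outline with the one nontrivial step left open rather than a proof.
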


The ideal $(I(G)^{s+1}: m)$ in Theorem \ref{thm.inductive} and its generators are understood via the following notion of even-connection.

\begin{definition}\label{even_connected} Let $G=(V,E)$ be a graph. Two vertices $u$ and $v$ ($u$ may be the same as $v$) are
said to be even-connected with respect to an $s$-fold product $e_1\cdots e_s$ where $e_i$'s are edges of $G$, not necessarily distinct,
if there is a path $p_0p_1\cdots p_{2k+1}$, $k\geq 1$ in $G$ such that:
\begin{enumerate}
 \item $p_0=u,p_{2k+1}=v.$
 \item For all $0 \leq l \leq k-1,$ $p_{2l+1}p_{2l+2}=e_i$ for some $i$.
 \item For all $i$, $\big|\{l \geq 0 \mid p_{2l+1}p_{2l+2}=e_i \}\big| \le \big|\{j \mid e_j=e_i\}\big|$.
 \item For all $0 \leq r \leq 2k$, $p_rp_{r+1}$ is an edge in $G$.
\end{enumerate}
\end{definition}

It turns out that $(I(G)^{s+1} : m)$ is generated  by monomials in degree 2.

\begin{theorem}[\protect{\cite[Theorem 6.1 and Theorem 6.7]{Ba}}] \label{even_connec_equivalent}
Let $G$ be a graph with edge ideal
$I = I(G)$, and let $s \geq 1$ be an integer. Let $m$ be a minimal generator of $I^s$.
Then $(I^{s+1} : m)$ is minimally generated by monomials of degree 2, and $uv$ ($u$ and $v$ may
be the same) is a minimal generator of $(I^{s+1} : m )$ if and only if either $\{u, v\} \in E(G) $ or $u$ and $v$ are even-connected with respect to $m$.
 \end{theorem}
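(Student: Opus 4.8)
The plan is to prove the two asserted statements together by analyzing exactly which monomials $w$ satisfy $w\cdot m\in I^{s+1}$, where $m=e_1\cdots e_s$ is the fixed minimal generator of $I^s$. Since $I$ and $m$ are monomial, $(I^{s+1}:m)$ is a monomial ideal, so it suffices to describe its monomial members and to locate the minimal ones. I would first record the trivial degree bound: a single variable $u$ cannot lie in the colon, because $\deg(u\cdot m)=2s+1$ while every element of $I^{s+1}$ has degree at least $2s+2$. Hence every minimal generator has degree at least $2$. The substance is then (i) that edges and even-connected pairs do produce colon elements, and (ii) conversely that these are the only minimal generators, which simultaneously forces generation in degree exactly $2$.

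For the forward direction, if $uv\in E(G)$ then $uv\cdot m\in I\cdot I^s=I^{s+1}$ at once. If instead $u$ and $v$ are even-connected via a path $p_0p_1\cdots p_{2k+1}$ with $p_0=u$ and $p_{2k+1}=v$, I would split the $2k+1$ path edges into the $k$ edges $p_{2l+1}p_{2l+2}$ $(0\le l\le k-1)$, which condition (3) of the definition places among $\{e_1,\dots,e_s\}$ with the correct multiplicities, and the remaining $k+1$ edges $p_{2l}p_{2l+1}$ $(0\le l\le k)$. Reading off variables gives the monomial identities $E:=p_1p_2\cdots p_{2k}$ for the product of the first group, and product of the second group $=uv\cdot p_1p_2\cdots p_{2k}=uv\cdot E$. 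Since $E\mid m$ by condition (3), we obtain $uv\cdot m=(uv\cdot E)\cdot(m/E)$, a product of $(k+1)+(s-k)=s+1$ edges of $G$, whence $uv\cdot m\in I^{s+1}$ and $uv\in(I^{s+1}:m)$. This settles (i) and shows all the claimed degree-$2$ elements are present, the case $u=v$ producing generators of the form $u^2$.

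For the converse I would take an arbitrary minimal monomial generator $w$ of $(I^{s+1}:m)$ together with a witness factorization $w\cdot e_1\cdots e_s=h\cdot g_0\cdots g_s$, where the $g_j$ are edges of $G$ and $h$ is a monomial, chosen so that the number of indices $j$ with $g_j$ equal to some $e_i$ is as large as possible. Cancelling the matched pairs leaves covering edges $g_0',\dots,g_r'$ and product edges $e_1',\dots,e_r'$, with one more covering edge than product edge and with no $g'$ equal to any $e'$. Viewing the $g'$ as red edges and the $e'$ as blue edges of a two-coloured multigraph on the variables, I would argue by a degree/parity count in the component carrying the surplus red edge that there is a red–blue alternating trail, beginning and ending with red edges (hence of odd edge-length $2k+1$), whose two endpoints are variables dividing $w$. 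Such a trail is precisely an even-connection path: its connecting edges are the red $g'$, its interior even-position edges are the blue $e'$, and condition (3) holds automatically because the blue edges are drawn from the $e_i$ with their correct multiplicities. Thus $uv\in(I^{s+1}:m)$ with $uv\mid w$, and minimality of $w$ forces $w=uv$ of degree $2$; this yields (ii), degree-$2$ generation, and the characterization simultaneously.

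The main obstacle is the alternating-trail extraction in the previous paragraph: one must make the right extremal choice of witness factorization and then produce an alternating red/blue trail of odd edge-length joining two variables of $w$, while guaranteeing that along the even positions no edge $e_i$ is reused beyond its multiplicity in $m$. I expect the cleanest route is an augmenting-path or Eulerian-type argument in the leftover multigraph, tracking the single surplus red edge as the source of the extra length. The parity of the trail (that it starts and ends red) and the bookkeeping behind condition (3) are the delicate points, and the degenerate case $u=v$, in which the trail closes up and yields a squared generator, must be permitted throughout.
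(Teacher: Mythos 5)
The paper itself does not prove Theorem \ref{even_connec_equivalent}; it quotes it from \cite{Ba}, so your proposal must be measured against Banerjee's original argument, which it follows in outline (maximal cancellation of common edges, then extraction of an alternating structure). Your preliminary degree count and your forward direction are complete and correct: the even-position edges form a sub-multiset of $\{e_1,\dots,e_s\}$, so $E\mid m$ and $uv\cdot m=(uv\cdot E)(m/E)$ is visibly a product of $s+1$ edges times a monomial. The genuine gap is in the converse, which carries essentially all the content of the theorem: the existence of the red--blue alternating trail is asserted (``I would argue by a degree/parity count\dots'', ``I expect the cleanest route\dots'') rather than proved, and the mechanism you point to does not work as stated. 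In a multigraph on variable \emph{names}, two leftover edges sharing a variable need not be linked in any way the monomial identity $w\prod e_i' = h\prod g_j'$ actually sees---the shared name's occurrences may be absorbed by $h$ or by $w$---so name-level components carry no reliable information; there is no well-defined single ``surplus red edge'' whose component one can analyze; and nothing at the name level forces the two ends of an alternating trail to be variables dividing $w$ rather than variables of $h$.

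The step can be completed within your plan by working with variable \emph{occurrences} instead of names, and then the maximal-cancellation choice becomes unnecessary. From $w\,e_1\cdots e_s = h\,g_0\cdots g_s$ fix a bijection between the two multisets of variable occurrences; join a red node $g_j$ to a blue node $e_i$ by one arc for each occurrence of $g_j$ matched to an occurrence of $e_i$ (necessarily of the same variable). Arcs only join red to blue and each node meets at most two arcs (one per occurrence), so this graph is a disjoint union of alternating paths and even cycles. Red occurrences total $2(s+1)$ and blue total $2s$, so the number of red occurrences matched into $w$ exceeds the number of blue occurrences matched into $h$ by exactly $2$; a path component with $a$ red and $b$ blue nodes has $a-b+1$ free red and $b-a+1$ free blue occurrences, and cycles have none, so summing over components forces some component to be a path with \emph{both} ends red. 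Its two free red occurrences are matched into $w$, so its endpoints $u,v$ divide $w$ (possibly $u=v$, giving $u^2\mid w$). Reading that component as a walk---consecutive edges share the matched variable, and each blue node is a distinct member of the multiset $\{e_1,\dots,e_s\}$, so condition (3) of Definition \ref{even_connected} holds automatically---yields $uv\in E(G)$ when the component is a single red node, and an even-connection with $k\ge 1$ otherwise; minimality of $w$ then gives $w=uv$, proving degree-$2$ generation and the characterization at once. This occurrence-level bookkeeping is precisely the missing lemma. Banerjee reaches the same conclusion differently, by induction on $s$: a common edge $f_j=e_i$ is cancelled using $(I^{s}:m/e_i)\subseteq(I^{s+1}:m)$, and the even-connected walk is built step by step; your global parity route, once repaired as above, is a legitimate and arguably cleaner alternative, but as written it is a program rather than a proof.
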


\section{General Upper Bounds for Regularity Function} \label{chapter3}

The aim of this section is to give a weaker general upper bound for $\reg I(G)^s$ than that of Conjecture A.

The heart of many studies on regularity of powers of edge ideals is to understand the colon ideal $J = I(G)^s : e_1 \dots e_{s-1}$ in making use of Banerjee's inductive method, Theorem \ref{thm.inductive}. We start by examining a local property for $J$.

\begin{lemma} \label{lem.induction}
Let $G$ be a simple graph with edge ideal $I = I(G)$ and let $s \in \NN$. Let $e_1, \dots, e_{s-1} \in E(G)$, $J = I^s : e_1 \dots e_{s-1}$, and let $G'$ be the graph associated to the polarization of $J$. Let $w \in V(G)$.
\begin{enumerate}
\item If $e_1$ is a leaf of $G$ then $J = I^{s-1} : e_2 \dots e_{s-1}$.
\item Suppose that $w \not\in N_G[\{e_1, \dots, e_{s-1}\}]$. Then
$$J:w = I(G - N_G[w])^s : e_1 \dots e_{s-1} + (u ~\big|~ u \in N_G[w]).$$
\item Suppose that $w \in N_G[e_1]$. Then
$$J:w = (I(G - N_{G'}[w])^t : f_1 \dots f_{t-1}) + (u ~\big|~ u \in N_{G'}(w))$$
for some $t \le s$, and a subcollection $\{f_1, \dots, f_{t-1}\}$ of $\{e_2, \dots, e_{s-1}\}$. Moreover, in this case, the graph associated to the polarization of $I(G-N_{G'}[w])^t : f_1 \dots f_{t-1}$ is an induced subgraph of that associated to the polarization of $I(G-N_G[w])^t : f_1 \dots f_{t-1}$.
\end{enumerate}
\end{lemma}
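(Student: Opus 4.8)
The common engine for all three parts is Banerjee's even-connection theorem (Theorem~\ref{even_connec_equivalent}): since $e_1\cdots e_{s-1}$ is a minimal generator of $I^{s-1}$, the colon $J = I^s : e_1\cdots e_{s-1}$ is minimally generated in degree $2$, its generators being the edges of $G$ together with the monomials $uv$ for which $u$ and $v$ are even-connected with respect to $e_1\cdots e_{s-1}$ (allowing $u=v$, which after polarization produces the whiskers distinguishing $G'$ from $G$). Each part is then a direct computation of the further colon $J:w = I^s : (w\,e_1\cdots e_{s-1})$, read off from this generating set. For (1), write $e_1=xy$ with $\deg_G y = 1$, so that $y$ lies on no edge but $e_1$; it suffices to prove $I^s : e_1 = I^{s-1}$, since then $J = (I^s:e_1):e_2\cdots e_{s-1} = I^{s-1}:e_2\cdots e_{s-1}$. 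The inclusion $\supseteq$ is immediate, and for $\subseteq$ I take a monomial $M$ with $Mxy\in I^s$ and fix edges $g_1,\dots,g_s$ whose product divides $Mxy$: if some $g_i=xy$, cancelling it exhibits $M\in I^{s-1}$; otherwise no $g_i$ involves $y$, so $g_1\cdots g_s$ divides $Mx$, and a one-line count on the exponent of $x$ (if $x$ is not used then already $M\in I^s$, and if it is, discarding the edge carrying the extra $x$ leaves a product of $s-1$ edges dividing $M$) again gives $M\in I^{s-1}$.

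For (2), the hypothesis $w\notin N_G[\{e_1,\dots,e_{s-1}\}]$ forces $w$ to be even-connected to no vertex with respect to $e_1\cdots e_{s-1}$: the first edge of any even-connection path issuing from $w$ lands in $N_G(w)$, and its following edge, which must be some $e_i$, would then place an endpoint of $e_i$ in $N_G(w)$, i.e. $w\in N_G[\{e_i\}]$, a contradiction. Hence the only generators of $J$ meeting $w$ are the genuine edges $wz$ with $z\in N_G(w)$, so coloning by $w$ contributes exactly the linear forms $z\in N_G(w)$, while the generators of $J$ disjoint from $N_G[w]$ survive. Since the same hypothesis guarantees every $e_i$ lies in $G-N_G[w]$ and the connecting edges avoid $N_G[w]$, these surviving generators are precisely those of $I(G-N_G[w])^s : e_1\cdots e_{s-1}$; assembling the two pieces yields the stated description of $J:w$.

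For (3) the edge $e_1$ is now active: because $w\in N_G[e_1]$, the even-connection paths issuing from $w$ can and do traverse $e_1$, so the degree-one part of $J:w$ is the full set of $G'$-neighbors $N_{G'}(w)$ (original neighbors together with even-connection partners), and coloning by $w$ \emph{spends} $e_1$. Tracking this bookkeeping, the generators of $J$ that survive after removing $N_{G'}[w]$ are exactly those of $I(H)^t : f_1\cdots f_{t-1}$, where $H=G-N_{G'}[w]$, the integer $t\le s$ records how many of the $e_i$ (always including $e_1$) are absorbed into the neighborhood of $w$, and $\{f_1,\dots,f_{t-1}\}$ is the surviving subcollection of $\{e_2,\dots,e_{s-1}\}$. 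Finally, since $N_G[w]\subseteq N_{G'}[w]$, the vertex set of $H$ is contained in that of $G-N_G[w]$; because polarization commutes with restriction of variables and even-connections in $G$ restricted to $H$ are realized inside $H$, the polarization graph of $I(H)^t : f_1\cdots f_{t-1}$ embeds as an induced subgraph of that of $I(G-N_G[w])^t : f_1\cdots f_{t-1}$.

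The routine parts are (1) and the generator accounting in (2). The crux is (3), where two points are delicate: first, justifying precisely which edges $e_i$ are consumed so as to pin down $t$ and the subcollection $\{f_j\}$, and second, the closing induced-subgraph comparison, which requires matching even-connections computed in the large graph $G$ and then restricted against those computed directly in the deleted subgraph $H$, and checking this correspondence is compatible with polarization. I expect this last compatibility --- that deleting $N_{G'}[w]$ and then polarizing agrees with polarizing and then deleting --- to be the main obstacle.
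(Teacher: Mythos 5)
Your parts (1) and (2) are sound. In (1) you prove $I^s : e_1 = I^{s-1}$ by a direct monomial-divisibility count, which is arguably cleaner than the paper's appeal to even-connections (the paper's one-line justification is really written for an isolated edge, whereas your cancellation argument genuinely handles a leaf whose non-degree-one endpoint has other neighbors). Part (2) follows the same route as the paper; note in passing that what your argument actually establishes is the summand $(u \mid u \in N_G(w))$ rather than $(u \mid u \in N_G[w])$, since $w^2 \notin J$ under the hypothesis of (2) --- your version is the correct one.

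The genuine gap is in part (3), at exactly the two places where the paper does real work, and you flag the second of them yourself as an expected ``obstacle'' rather than resolving it. First, to see that a generator $uv$ with $u, v \notin N_{G'}[w]$ whose even-connection path uses $e_1$ is subsumed into $(u \mid u \in N_{G'}(w))$, you say only that coloning by $w$ ``spends'' $e_1$; the actual argument is that, because $w \in N_G[e_1]$, truncating the path $u = p_0, \dots, p_{2k+1} = v$ at the occurrence of $e_1$ (prepending $w$ when $w$ is merely adjacent to an endpoint of $e_1$) yields an even-connection between $w$ and one of $u, v$, forcing that endpoint into $N_{G'}(w)$. Second, and more seriously, the ``Moreover'' induced-subgraph statement does not follow from $V(G - N_{G'}[w]) \subseteq V(G - N_G[w])$ together with ``polarization commutes with restriction of variables'': one must rule out the possibility that two vertices $u, v$ of $H = G - N_{G'}[w]$ are even-connected in $G - N_G[w]$ only via a path passing through some $w' \in N_{G'}[w] \setminus N_G[w]$, which would create an edge $uv$ in the polarized graph of $I(G - N_G[w])^t : f_1 \cdots f_{t-1}$ that is absent from the polarized graph of $I(H)^t : f_1 \cdots f_{t-1}$ and so destroy inducedness. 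The paper closes this by concatenating such a path with the even-connection from $w$ to $w'$ to produce an even-connection from $w$ to $u$ or to $v$, contradicting $u, v \notin N_{G'}[w]$. That concatenation (and the parity bookkeeping verifying the concatenated walk is a legitimate even-connection) is the missing idea; without it, your assertion that ``even-connections in $G$ restricted to $H$ are realized inside $H$'' is precisely the statement left unproved.
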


\begin{proof} (1) It follows from Theorem \ref{even_connec_equivalent} that $J$ is obtained by adding to $I$ quadratic generators $uv$, where $u$ and $v$ are even-connected in $G$ with respect to $e_1 \dots e_{s-1}$. If $e_1$ is an isolated edge then clearly, by definition, the even-connected path between $u$ and $v$ does not contain $e_1$. Thus, $uv \in I^{s-1} : e_2 \dots e_{s-1}$ and (1) is proved.

(2) It can be seen that if $w \not\in N_G[\{e_1, \dots, e_{s-1}\}]$ then $w$ is not in any even-connected path with respect to $e_1 \dots e_{s-1}$. Thus, even-connected paths with respect to $e_1 \dots e_{s-1}$ between two vertices that are not in $N_G[w]$ are even-connected path with respect to $e_1 \dots e_{s-1}$ in $G - N_G[w]$. Furthermore, any edge $uv \in J$, for which $u \in N_G[w]$ (similarly if $v \in N_G[w]$), would be divisible by $u \in J:w$ and, thus, subsumed into the ideal $(u ~\big|~ u \in N_G[w])$. Therefore, (2) follows.

\begin{figure}[h]
  \includegraphics[width=0.9\linewidth]{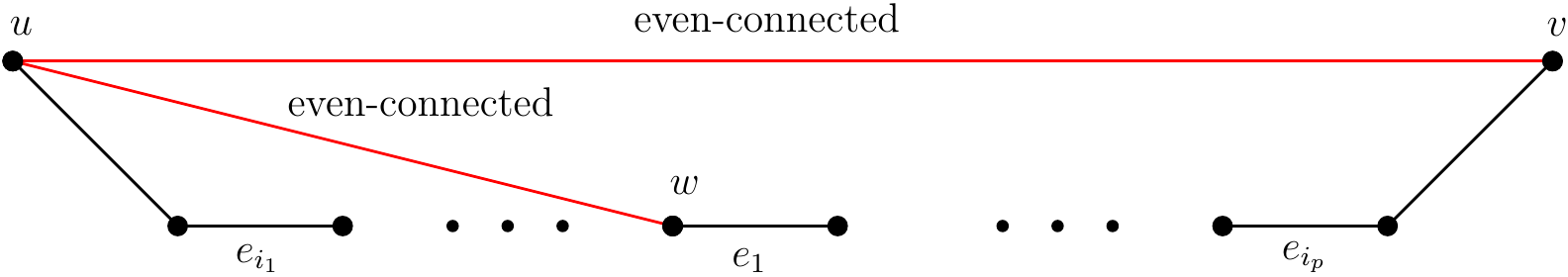}
  \caption{When $w \in e_1$}
  \label{fig:even1}
\end{figure}

\begin{figure}[h]
  \includegraphics[width=0.9\linewidth]{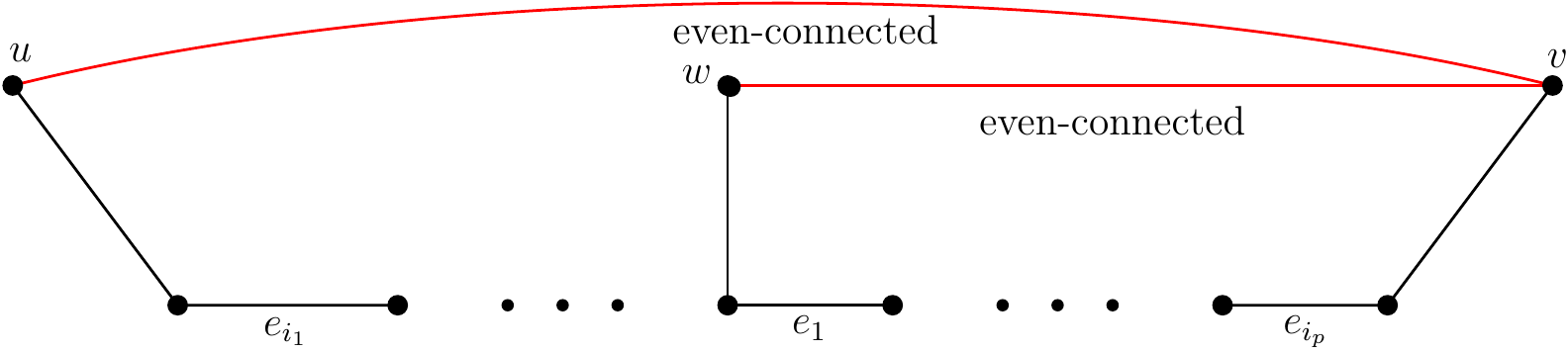}
  \caption{When $w \in N_G(e_1)$}
  \label{fig:even2}
\end{figure}

(3) We first observe that for any subcollection $\{f_1, \dots, f_{t-1}\}$ of $\{e_1, \dots, e_{s-1}\}$ (for some $t \le e$), by the definition of even-connection, we have
$$I(G - N_{G'}[w])^t : f_1 \dots f_{t-1} \subseteq J \subseteq (J:w).$$
Moreover, for any $u \in N_{G'}(w)$, $u$ and $w$ are even-connected with respect to $e_1 \dots e_{s-1}$, and so $uw \in J$, i.e., $u \in (J : w)$. Thus, we have the inclusion
$$(I(G - N_{G'}[w])^t : f_1 \dots f_{t-1}) + (u ~\big|~ u \in N_{G'}(w)) \subseteq (J:w).$$

To prove the other inclusion, let us analyse the minimal generators of $(J:w)$ more closely. Consider any $uv \in J$, where $u$ and $v$ are even-connected with respect to $e_1 \dots e_{s-1}$. If $v \equiv w$ (similarly if $u \equiv w$) then $u \in N_{G'}(w)$. If $u,v \not\equiv w$, but $v \in N_{G'}(w)$ (similarly if $u \in N_{G'}(w)$), then $uv$ is subsumed in the ideal $(u ~\big|~ u \in N_{G'}(w))$.

Suppose now that $u,v \not\in N_{G'}[w]$. Then $u,v \in G - N_{G'}[w]$, which are even-connected with respect to $e_1 \dots e_{s-1}$. Observe that if the even-connected path between $u$ and $v$ contains $e_1$ then, by considering a subpath of this path, either $u$ and $w$ or $v$ and $w$ are even-connected with respect to $e_1 \dots e_{s-1}$ (see Figures \ref{fig:even1} and \ref{fig:even2}). That is, either $u$ or $v$ is in $N_{G'}(w)$, and so $uv$ is again subsumed in the ideal $(u ~\big|~ u \in N_{G'}(w))$. Therefore, we may assume that $u$ and $v$ are even-connected with respect to a subcollection $\{f_1, \dots, f_{t-1}\}$ of $\{e_2, \dots, e_{s-1}\}$. That is, $uv \in I(G - N_{G'}[w])^t : f_1 \dots f_{t-1}$.

\begin{figure}[h]
  \includegraphics[width=0.7\linewidth]{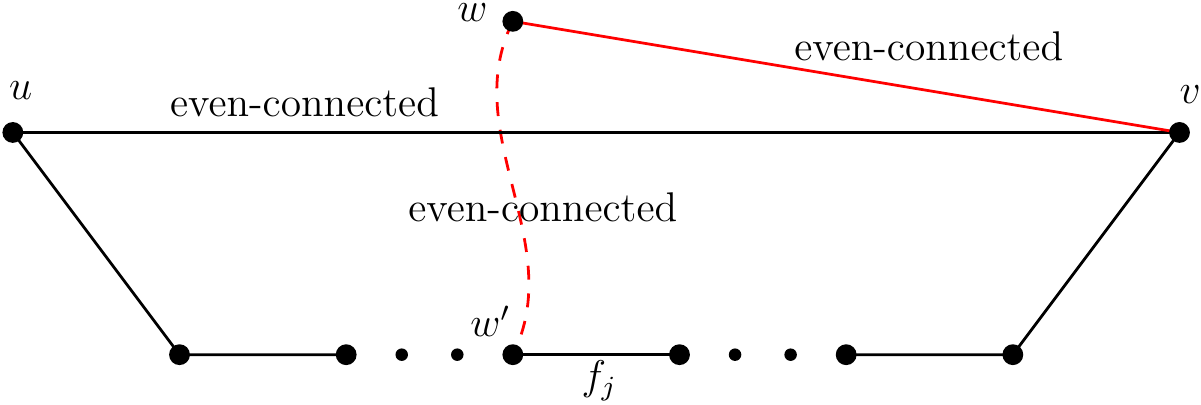}
  \caption{When an even-connected path $u$ --- $v$ contains $w' \in N_{G'}[w]$}
  \label{fig:evenwsub}
\end{figure}

To establish the last statement, consider any two vertices $u$ and $v$ which are even-connected in $G-N_G[w]$ with respect to $f_1 \dots f_{t-1}$. If the even-connected path between $u$ and $v$ does not contain any vertex in $N_{G'}[w] \setminus N_G[w]$ then $u$ and $v$ are even-connected in $G-N_{G'}[w]$. If the even-connected path between $u$ and $v$ contain a vertex $w' \in N_{G'}[w] \setminus N_G[w]$ (see Figure \ref{fig:evenwsub}) then, by combining with the even-connected path from $w$ to $w'$, either $u$ and $w$ or $v$ and $w$ are even-connected in $G'$. That is, either $u$ or $v$ is already in $N_{G'}[w]$ (or equivalently, not in $G-N_{G'}[w]$). Hence, the graph associated to the polarization of $I(G-N_{G'}[w])^t : f_1 \dots f_{t-1}$ is an induced subgraph of that associated to the polarization of $I(G-N_G[w])^t : f_1 \dots f_{t-1}$.
\end{proof}

By understanding local properties of $J$ in Lemma \ref{lem.induction}, we are able to give a general upper bound for the regularity function based on a well chosen numerical function on families of graphs. Specific interesting general bounds can be obtained by picking these numerical functions suitably.

\begin{definition}
A collection $\F$ of simple graphs is a \emph{hierarchy} if for any nonempty graph $G \in \F$, both $G - u$ and $G - N_G[u]$ are in $\F$  for any vertex $u \in V(G)$. 
\end{definition}

\begin{theorem} \label{thm.hierarchy}
Let $\F$ be a hierarchy family of simple graphs. Let $f : \F \longrightarrow \NN$ be a function satisfying the following properties:
\begin{enumerate}
\item for any $G \in \F$, $\reg I(G) \le f(G)$; and
\item for any nonempty graph $G \in \F$ and each non-isolated vertex $w \in V(G)$,
 $$f(G-w) \le f(G) \text{ and } f(G - N_G[w]) \le \max\{f(G)-1, 2\}.$$
\end{enumerate}
Then, for any $G \in \F$ and any $s \ge 1$, we have
$$\reg I(G)^s \le 2s + f(G) - 2.$$
\end{theorem}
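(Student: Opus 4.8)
The plan is to induct on $s$, using Banerjee's inductive inequality (Theorem \ref{thm.inductive}) for the step $s \to s+1$, with the local structural analysis of colon ideals (Lemma \ref{lem.induction}) providing control over the colon ideals that arise. The base case $s = 1$ is exactly hypothesis (1): $\reg I(G) \le f(G) = 2\cdot 1 + f(G) - 2$. For the inductive step, by Theorem \ref{thm.inductive} it suffices to bound $\reg(I(G)^{s+1} : m) + 2s$ for each minimal generator $m = e_1 \cdots e_s$ of $I(G)^s$, since the other term $\reg I(G)^s$ is already controlled by induction and $2s + f(G) - 2 \le 2(s+1) + f(G) - 2$. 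Thus the crux is to prove the uniform bound
$$\reg \big( I(G)^{s+1} : e_1 \cdots e_s \big) \le f(G),$$
because then $\reg(I(G)^{s+1} : m) + 2s \le f(G) + 2s = 2(s+1) + f(G) - 2$, as desired.

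To establish this uniform bound I would set $J = I(G)^{s+1} : e_1 \cdots e_s$ and prove, by a nested induction, the stronger-looking claim that for every colon ideal of the form appearing in Lemma \ref{lem.induction} (and, after polarization, its associated graph lying in the hierarchy $\F$), the regularity is bounded by $f$ evaluated at the appropriate graph. The natural inductive device is Lemma \ref{exact}: choosing a variable $w$ and writing $\reg J \le \max\{\reg(J:w) + 1, \reg(J,w)\}$. The ideal $(J, w)$ corresponds to deleting the vertex $w$, so it is governed by $f(G - w) \le f(G)$ together with the induced-subgraph monotonicity of Lemma \ref{lem.indsubgraph}. The ideal $(J : w)$ is precisely where Lemma \ref{lem.induction}(2),(3) enters: it decomposes as an edge-ideal-type colon ideal on the smaller graph $G - N_G[w]$ (or $G - N_{G'}[w]$), plus linear forms from the neighborhood of $w$. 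Since linear forms do not increase regularity and the remaining piece lives on $G - N_G[w]$, its regularity should be bounded by $f(G - N_G[w]) \le \max\{f(G) - 1, 2\}$; adding the $+1$ from Lemma \ref{exact} then recovers $f(G)$. The hierarchy condition guarantees that $G - w$ and $G - N_G[w]$ stay in $\F$, so $f$ is defined and the hypotheses (1),(2) remain available throughout the recursion.

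The main obstacle I anticipate is twofold. First, the colon ideals $J$ are \emph{not} edge ideals of graphs but rather (after polarization) edge ideals of graphs $G'$ obtained by adding even-connection edges, so hypotheses (1) and (2) of the theorem, stated for honest edge ideals in $\F$, must be transferred to $G'$. This is exactly why Lemma \ref{lem.induction}(3) is phrased to show that the graph from polarizing $I(G - N_{G'}[w])^t : f_1 \cdots f_{t-1}$ is an \emph{induced subgraph} of the one from $I(G - N_G[w])^t : f_1 \cdots f_{t-1}$: combined with Lemma \ref{lem.indsubgraph}, this lets me pass from the genuine hierarchy member $G - N_G[w]$ to the even-connection-augmented graph $G - N_{G'}[w]$ without losing the regularity bound. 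Second, I must ensure the recursion on colon ideals terminates and that the exponent $t \le s$ and the subcollection $\{f_1, \dots, f_{t-1}\}$ supplied by Lemma \ref{lem.induction} keep the inductive hypothesis on $s$ applicable at each stage. I would handle termination by noting that each application of Lemma \ref{exact} strictly reduces the number of vertices (passing to $G - w$ or $G - N_G[w]$), so the process halts, and at the leaves either the graph is empty or $f = 2$, at which point Theorem \ref{thm.regtwo} and hypothesis (1) close out the estimate.
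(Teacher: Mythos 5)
Your overall reduction is the same as the paper's: use Theorem \ref{thm.inductive} to reduce everything to the uniform bound $\reg(I^s : e_1 \cdots e_{s-1}) \le f(G)$, then attack the colon ideal $J$ with Lemma \ref{exact}, controlling $\reg(J:w)$ via Lemma \ref{lem.induction}(2),(3), the induced-subgraph statement about polarizations, and the hypothesis $f(G - N_G[w]) \le f(G)-1$. That half of your argument matches the paper and is sound.

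The gap is in the deletion branch. You assert that $(J,w)$ ``corresponds to deleting the vertex $w$'' and is therefore governed by $f(G-w) \le f(G)$. This identification fails for $s \ge 3$: the polarized graph of $(J,w)$ is $G' - w$, where $G'$ carries even-connection edges $uv$ whose witnessing paths may pass \emph{through} $w$ (or use an edge $e_i$ containing $w$). Such edges survive in $G'-w$ but are not present in the polarization of $I(G-w)^s : e_1 \cdots e_{s-1}$ (which may not even be well defined if $w \in e_i$), so $(J,w)$ is strictly larger than anything you can reach by applying the inductive hypothesis to $G - w \in \F$. Lemma \ref{lem.indsubgraph} cannot rescue this, since it bounds an induced subgraph by the ambient graph, which is the direction you do not have. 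The paper avoids this entirely: it chooses a vertex cover $w_1, \dots, w_l$ of the polarized graph of $J$, so that $(J, w_1, \dots, w_l) = (w_1, \dots, w_l)$ has regularity $1$, and uses the identity $(J, w_1, \dots, w_i) : w_{i+1} = (J : w_{i+1}) + (w_1, \dots, w_i)$ together with \cite[Corollary 3.2]{Herzog}. Every colon arising in the iterated application of Lemma \ref{exact} then reduces to a single-vertex colon of $J$ itself — the branch you already control — and the deletion branch terminates at an ideal of variables rather than at a graph that would need to be interpreted inside $\F$. You need this (or an equivalent device) to close the recursion; as written, the bound on $\reg(J,w)$ is unjustified. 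A smaller point: when $f(G) = 2$ the estimate $f(G - N_G[w]) + 1 \le f(G)$ fails, which is why the paper disposes of that case separately via \cite{HHZ} before starting the induction; your remark about ``closing out at the leaves with Theorem \ref{thm.regtwo}'' does not quite substitute for this.
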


\begin{proof} Fix a graph $G \in \F$ and let $I = I(G)$. If $f(G) \le 2$ then the result is immediate from \cite{HHZ}. Assume that $f(G) \ge 3$. Then the condition on $f(G - N_G[w])$ reads $f(G-N_G[w]) \le f(G)-1.$

By Theorem \ref{thm.inductive} and the hypothesis that $\reg I(G) \le f(G)$, it suffices to show that for any collection of edges $e_1, \dots, e_{s-1}$ in $G$ (not necessarily distinct), we have
\begin{align}
\reg (I^s : e_1 \dots e_{s-1}) \le f(G). \label{eq.bounds}
\end{align}
We shall prove (\ref{eq.bounds}) by induction on $s$ and on the size of the graph $G$. Let $J = I^s : e_1 \dots e_{s-1}$. The statement is trivial if $s = 1$ (whence, $J = I$) or if $G$ is the empty graph (whence, $J = (0)$). Suppose that $s \ge 2$ and $G$ is not the empty graph.

Let $w \in V(G)$ be any vertex in $G$. It follows from Lemma \ref{lem.induction} that $\reg (J:w)$ is equal to either  $\reg (I(G - N_G[w])^s : e_1 \dots e_{s-1})$ or $\reg (I(G - N_{G'}[w])^s : e_1 \dots e_{s-1})$  where the graph associated to the polarization of $I(G-N_{G'}[w])^t : f_1 \dots f_{t-1}$ is an induced subgraph of that associated to the polarization of $I(G-N_G[w])^t : f_1 \dots f_{t-1}$. If the latter is the case, then by Lemma \ref{lem.indsubgraph} and the fact that polarization does not change the regularity, we have $$\reg (J:w) \le \reg (I(G-N_G[w])^t : f_1 \dots f_{t-1}).$$
Thus, since $G-N_G[w] \in \F$, by induction on the size of the graphs and our assumption, we have
\begin{eqnarray}\label{eq:colon}
\reg (J:w) \le f(G-N_G[w]) \le f(G)-1 \text{ for any vertex } w \in V(G).
\end{eqnarray}

By taking, for example, a vertex cover of the graph associated to the polarization of $J$, we may assume that we have a collection of distinct vertices $w_1, \dots, w_l$ of $G$ such that $(J,w_1, \dots, w_l) = (w_1, \dots, w_l)$.

Observe that for each $i = 1, \dots, l-1$, we have 
$$(J,w_1, \dots, w_i):w_{i+1} = (J:w_{i+1}) + (w_1, \dots, w_{i}).$$
Thus, by \cite[Corollary 3.2]{Herzog} and (\ref{eq:colon}), we get 
$$\reg [(J,w_1, \dots, w_i):w_{i+1}]  \le \reg (J:w_{i+1}) \le f(G)-1.$$
This, by successively applying Lemma \ref{exact} with $(J, w_1, \dots, w_i)$ and $w_{i+1}$, implies that
$$\reg (J,w_1) \le f(G).$$
The assertion now follows by utilizing Lemma \ref{exact} with $J$ and $w_1$.
\end{proof}

Based on the known upper bound for $\reg I(G)$, given in \cite{HVT2008},  one can take $f(G)$ in Theorem \ref{thm.hierarchy}  to be the matching number of a graph and obtain the following interesting bound for the regularity function.

\begin{theorem} \label{thm.matching}
Let $G$ be a simple graph with edge ideal $I = I(G)$. Let $\beta(G)$ denote the matching number of $G$. Then, for all $s \ge 1$, we have
$$\reg I^s \le 2s+\beta(G)-1.$$
\end{theorem}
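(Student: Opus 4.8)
The plan is to derive Theorem~\ref{thm.matching} as a direct application of Theorem~\ref{thm.hierarchy} by choosing the right hierarchy family and the right numerical function. First I would take $\F$ to be the family of \emph{all} simple graphs, which is trivially a hierarchy since deleting a vertex or a closed neighborhood of a vertex always returns a simple graph. Then I would set $f(G) = \beta(G) + 1$, where $\beta(G)$ is the matching number. With this choice, the conclusion of Theorem~\ref{thm.hierarchy} reads $\reg I(G)^s \le 2s + f(G) - 2 = 2s + \beta(G) - 1$, which is precisely the desired bound. So the entire task reduces to verifying that this $f$ satisfies conditions (1) and (2) of Theorem~\ref{thm.hierarchy}.

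For condition (1), I would invoke the result of Ha--Van Tuyl cited in the introduction, namely $\reg I(G) \le \beta(G) + 1 = f(G)$, which holds for every graph and is exactly what is needed. This step is immediate once the reference is in place. The substance of the verification lies in condition (2), which requires two monotonicity-type inequalities for the matching number under the two deletion operations.

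For the deletion inequality $f(G - w) \le f(G)$, I would observe that any matching of $G - w$ is also a matching of $G$, so $\beta(G - w) \le \beta(G)$, and adding $1$ to both sides gives $f(G-w) \le f(G)$. For the closed-neighborhood inequality, I need $f(G - N_G[w]) \le \max\{f(G) - 1, 2\}$; since $f(G) = \beta(G)+1 \ge 2$ always, and in the relevant case of Theorem~\ref{thm.hierarchy} one has $f(G) \ge 3$, this amounts to showing $\beta(G - N_G[w]) \le \beta(G) - 1$ whenever $w$ is a non-isolated vertex. The key point here is that if $w$ is non-isolated, it has a neighbor $u$, so $wu$ is an edge of $G$; given any matching $M$ of $G - N_G[w]$, none of its edges touches any vertex of $N_G[w]$ (in particular none touches $w$ or $u$), so $M \cup \{wu\}$ is a matching of $G$ of size $|M| + 1$. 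Hence $\beta(G - N_G[w]) + 1 \le \beta(G)$, i.e. $\beta(G - N_G[w]) \le \beta(G) - 1$, which yields $f(G - N_G[w]) \le f(G) - 1$ as required.

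The main obstacle, such as it is, is conceptual rather than technical: recognizing that the problem is already solved by the machinery of Theorem~\ref{thm.hierarchy} once the correct $f$ is identified, so that all that remains is the elementary matching-number bookkeeping above. I would then simply write that Theorem~\ref{thm.hierarchy} applies with $\F$ the family of all simple graphs and $f(G) = \beta(G) + 1$, and conclude the stated bound. I do not anticipate any difficulty beyond confirming these three elementary inequalities and citing \cite{HVT2008} for condition~(1).
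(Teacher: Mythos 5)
Your proposal is correct and follows exactly the same route as the paper: it applies Theorem~\ref{thm.hierarchy} with $\F$ the family of all simple graphs and $f(G)=\beta(G)+1$, citing \cite{HVT2008} for condition (1) and using the same matching augmentation argument (adjoining an edge incident to $w$ to a matching of $G-N_G[w]$) for condition (2). No discrepancies to report.
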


\begin{proof} Let $\F$ be the family of all simple graphs. Then $\F$ clearly is a hierarchy. Let $f(G) = \beta(G)+1$ for all $G \in \F$. It is easy to see that:
\begin{enumerate}
\item $\reg I(G) \le f(G)$ by \cite{HVT2008}; and
\item For any non-isolated vertex $w$ in $G,$ clearly $\beta(G-w) \le \beta(G)$, and we can always add an edge incident to $w$ to any matching of $G - N_G[w]$ to get a bigger matching, and so $f(G-N_G[w]) \le f(G)-1.$
\end{enumerate}
Hence, the statement follows from Theorem \ref{thm.hierarchy}.
\end{proof}

A particular interesting application of Theorem \ref{thm.matching} is for the class of Cameron-Walker graphs introduced in \cite{CW}. These are graphs for which $\nu(G) = \beta(G)$. See \cite{HHKO} for a further classification of Cameron-Walker graphs.

\begin{corollary} \label{cor.CW}
Let $G$ be a Cameron-Walker graph and let $I = I(G)$ be its edge ideal. Then, for all $s \ge 1$, we have
$$\reg I^s = 2s + \nu(G)-1.$$
\end{corollary}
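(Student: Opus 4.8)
The plan is to sandwich the quantity $\reg I^s$ between a known lower bound and the upper bound just established, and then to observe that these two bounds coincide precisely under the hypothesis $\nu(G) = \beta(G)$, which is the defining property of a Cameron-Walker graph.

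First I would invoke the general lower bound (\ref{eq.genlowerbound}), which holds for every simple graph and every $s \ge 1$, to obtain
$$\reg I^s \ge 2s + \nu(G) - 1.$$
Next I would apply Theorem \ref{thm.matching}, which supplies the complementary upper bound
$$\reg I^s \le 2s + \beta(G) - 1.$$
Finally, since $G$ is Cameron-Walker, we have $\nu(G) = \beta(G)$ by definition, so the right-hand sides of the two displayed inequalities are equal. Squeezing $\reg I^s$ between them forces equality, which yields the claimed formula $\reg I^s = 2s + \nu(G)-1$ for all $s \ge 1$.

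There is no substantive obstacle remaining at this stage: all of the difficulty has already been absorbed into Theorem \ref{thm.matching}, whose proof rests on the hierarchy machinery of Theorem \ref{thm.hierarchy} together with the bound $\reg I(G) \le \beta(G)+1$ from \cite{HVT2008}, and into the lower bound (\ref{eq.genlowerbound}) quoted from \cite{BHT}. The only thing genuinely verified here is that these two bounds match under the equality $\nu(G)=\beta(G)$, which is immediate. I would note in passing that the same argument establishes the equality for \emph{any} graph satisfying $\nu(G)=\beta(G)$, so the essential content of the corollary is simply the identification of this coincidence with the Cameron-Walker condition.
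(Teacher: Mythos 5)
Your proposal is correct and matches the paper's argument exactly: the paper also deduces the corollary by combining the upper bound of Theorem \ref{thm.matching} with the lower bound (\ref{eq.genlowerbound}) and the defining equality $\nu(G)=\beta(G)$ for Cameron--Walker graphs. Your closing remark that the argument applies to any graph with $\nu(G)=\beta(G)$ is also consistent with how the paper frames this class.
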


\begin{proof} The conclusion is an immediate consequence of Theorem \ref{thm.matching} noting that $\nu(G) = \beta(G)$ if $G$ is a Cameron-Walker graph.
\end{proof}

It is known, by the main theorem of \cite{HHZ}, that if $I(G)$ has a linear resolution then so does $I(G)^s$ for any $s \in \NN$. Thus, the first nontrivial case of Conjecture $\text{A}$ is for those graphs $G$ such that $G$ is locally linear and $\reg I(G) > 2$. Recall that by \cite[Proposition 4.9]{CHHKTT}, in this case, we necessarily have $\reg I(G) = 3$. Theorem \ref{thm.hierarchy} allows us to settle Conjecture $\text{A}$ for this class of graphs.

\begin{theorem} \label{thm.conj3}
Let $G$ be a graph with edge ideal $I = I(G)$. Suppose that $G$ is locally linear. Then for all $s \ge 1$, we have
$$\reg I^s \le 2s+\reg I-2 \le 2s+1.$$
\end{theorem}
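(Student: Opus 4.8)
The plan is to realize Theorem \ref{thm.conj3} as a direct application of the hierarchy machinery in Theorem \ref{thm.hierarchy}, by producing a suitable family $\F$ and a suitable function $f$. The natural choice is to let $\F$ be the family of all \emph{locally linear} graphs, and to set $f(G) = \reg I(G)$. The first thing I would verify is that $\F$ is genuinely a hierarchy: given a locally linear graph $G$ and a vertex $u$, I need both $G-u$ and $G-N_G[u]$ to remain locally linear. This is where I would lean on the fact that being locally linear is an induced-subgraph condition phrased through the colon ideals $\reg(I(G):x)$. Since a colon $I(H):x$ for an induced subgraph $H$ of $G$ corresponds to a localized neighborhood computation that is itself an induced piece of the corresponding computation in $G$, I expect $\reg(I(H):x) \le \reg(I(G):x) \le 2$ for every vertex $x$ of $H$; combined with Lemma \ref{lem.indsubgraph} this should close the hierarchy check.

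Next I would verify the two properties required of $f(G)=\reg I(G)$ in Theorem \ref{thm.hierarchy}. Property (1), that $\reg I(G) \le f(G)$, is an equality and hence immediate. Property (2) has two halves. The inequality $f(G-w) \le f(G)$, i.e. $\reg I(G-w) \le \reg I(G)$, is exactly Lemma \ref{lem.indsubgraph}. The harder half is $f(G-N_G[w]) \le \max\{f(G)-1, 2\}$, that is, $\reg I(G-N_G[w]) \le \max\{\reg I(G)-1,\,2\}$. Here I would exploit local linearity: removing the closed neighborhood $N_G[w]$ is, up to the standard dictionary, governed by $\reg(I(G):w)$, which is at most $2$ by hypothesis. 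I would aim to show $\reg I(G-N_G[w]) \le \reg(I(G):w) + 1 \le 3$, but to land inside $\max\{f(G)-1,2\}$ I must handle the two regimes $f(G)=3$ and $f(G)=2$ separately: when $\reg I(G)=3$ the bound $f(G)-1=2$ should follow from a sharper estimate tying $\reg I(G-N_G[w])$ to the locally linear colon, and when $\reg I(G)=2$ the graph is co-chordal by Theorem \ref{thm.regtwo}, and induced subgraphs of co-chordal graphs are co-chordal, giving $\reg I(G-N_G[w]) \le 2$.

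With both hypotheses of Theorem \ref{thm.hierarchy} in hand, that theorem immediately yields $\reg I^s \le 2s + f(G) - 2 = 2s + \reg I - 2$ for all $s \ge 1$, which is the first inequality in the statement. The second inequality, $\reg I \le 3$, is precisely \cite[Proposition 4.9]{CHHKTT} applied to a locally linear graph (taking $r=3$), giving $2s + \reg I - 2 \le 2s+1$ and completing the proof.

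I expect the main obstacle to be the estimate $\reg I(G-N_G[w]) \le \max\{\reg I(G)-1, 2\}$, specifically pinning down that removing $N_G[w]$ drops the regularity by at least one in the $\reg I = 3$ case. The cleanest route is to relate $G - N_G[w]$ to the localization/colon at $w$: I would use Lemma \ref{exact} with the variable $w$ to split $\reg I(G)$ into $\reg(I:w)+1$ and $\reg(I,w)$, observe that $(I,w)$ corresponds to $G-w$ while $(I:w)$ carries the neighborhood data, and then argue that local linearity forces the relevant colon regularity to be small enough that the neighborhood-deleted graph cannot reach regularity $\reg I(G)$. Verifying that this localization identifies correctly with the induced subgraph $G-N_G[w]$, rather than merely bounding it, is the delicate point I would spend the most care on.
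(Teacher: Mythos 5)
Your overall strategy is exactly the paper's: take $\F$ to be the family of locally linear graphs, set $f(G)=\reg I(G)$, and invoke Theorem \ref{thm.hierarchy}. The hierarchy check (induced subgraphs of locally linear graphs are locally linear, via Lemma \ref{lem.indsubgraph}), the first half of condition (2), and the final bound $\reg I\le 3$ via \cite[Proposition 4.9]{CHHKTT} with $r=3$ are all handled as in the paper.

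However, you leave the one substantive step unclosed, and the route you sketch for it would not work as stated. You propose to show $\reg I(G-N_G[w]) \le \reg(I(G):w)+1 \le 3$ and then, in the regime $\reg I(G)=3$, to recover the needed bound of $2$ from an unspecified ``sharper estimate''; the bound $3$ does not land inside $\max\{f(G)-1,2\}=2$, so as written condition (2) of Theorem \ref{thm.hierarchy} is not verified. The missing ingredient is the standard exact identity $I(G):w = I(G-N_G[w]) + (u \mid u \in N_G(w))$: the added generators are variables not occurring in $I(G-N_G[w])$, hence form a regular sequence modulo it and do not raise the regularity, so $\reg I(G-N_G[w]) \le \reg\big(I(G):w\big) \le 2$ follows \emph{directly} from local linearity, with no ``$+1$'' loss, no localization subtlety, and no case split on $\reg I(G)\in\{2,3\}$ (since $\max\{f(G)-1,2\}\ge 2$ always). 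With that one line inserted in place of your proposed detour, your argument coincides with the paper's proof.
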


\begin{proof} Let $\F$ be the family of locally linear graphs (including those whose edge ideals have linear resolutions). Define $f : \F \longrightarrow \NN$ by $f(G) = \reg I(G)$ for all $G \in \F$. By the definition and Lemma \ref{lem.indsubgraph}, the edge ideal of any proper induced subgraph of $G \in \F$ has a linear resolution. Thus, $\F$ is a hierarchy and $f$ satisfies conditions of Theorem \ref{thm.hierarchy}. The conclusion now follows from that of Theorem \ref{thm.hierarchy}.
\end{proof}

\begin{example}
Let $G$ be a graph such that $G^c$ is triangle-free (see, for example, Figure \ref{fig:trianglefree}). It can be seen that for any $x \in V(G)$, $G - N_G[x]$ is a complete graph (and, thus, is of regularity 2). Therefore, $G$ is a locally linear graph.
\begin{figure}[h]
  \includegraphics[width=0.5\linewidth]{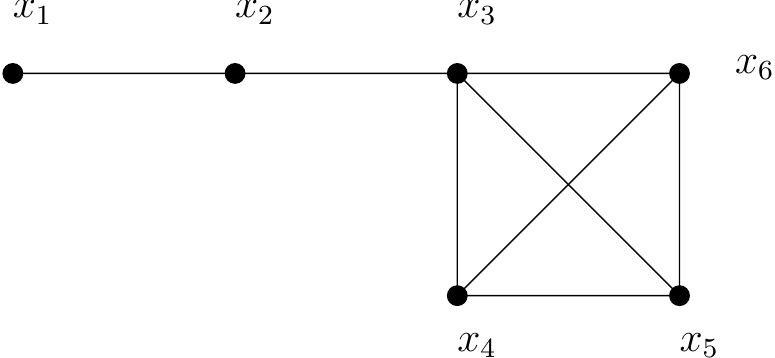}
  \caption{A graph whose complement is triangle-free}
  \label{fig:trianglefree}
\end{figure}
\end{example}


\section{Regularity Function of Gap-free Graphs} \label{chapter4}

In this section, we focus on gap-free graphs, investigating both Conjectures $\text{A}'$ and B. We start with a stronger version of \cite[Lemma 6.18]{Ba}. The proof is almost the same as that given in \cite[Lemma 6.18]{Ba}

\begin{lemma}\label{lem.gaplocal}
Let $G$ be a gap-free graph with edge ideal $I = I(G)$. Let $e_1, \dots, e_{s-1}$ be a collection of edges, let $J = I^s : e_1 \dots e_{s-1}$, and let $G'$ be the graph associated to the polarization of $J$. Let $W \subseteq V(G)$. Suppose that $u=p_0, \dots, p_{2k+1}=v$ is an even-connected path in $G$ with respect to $e_1 \dots e_{s-1}$ satisfying:
\begin{enumerate}
\item $u,v \not\in W$; and
\item this path is of the longest possible length with respect to condition (1).
\end{enumerate}
Then $G' - W - N_{G'}[u]$ is obtained by adding isolated vertices to an induced subgraph of $G - N_G[u]$.
\end{lemma}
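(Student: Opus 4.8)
The plan is to use Theorem \ref{even_connec_equivalent} to describe $G'$ explicitly: its edges are the edges of $G$ together with the quadratic even-connection generators of $J$, where a non-squarefree generator $x^2$ contributes, after polarization, a single edge joining $x$ to a duplicate vertex $x'$ whose only neighbor is $x$. With this description I would reduce the statement to two assertions: (A) every edge of $G'$ between two surviving original vertices $a,b \in V(G) \setminus (W \cup N_{G'}[u])$ is already an edge of $G$; and (B) every surviving duplicate vertex is isolated in $G' - W - N_{G'}[u]$. Since $I \subseteq J$ gives $N_G[u] \subseteq N_{G'}[u]$, a surviving original vertex automatically lies in $V(G - N_G[u])$, so (A) identifies the non-isolated part with an induced subgraph of $G - N_G[u]$, while (B) accounts for the extra isolated vertices.

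Both (A) and (B) reduce to a single core claim: if $x,y \in V(G)$ (allowing $x=y$, the case relevant to (B)) are even-connected with respect to $e_1\cdots e_{s-1}$ and $x,y \notin W \cup N_{G'}[u]$, then a contradiction ensues. The first key step is a purely combinatorial splicing observation: writing the even-connected path as $Q\colon x=q_0,\dots,q_{2m+1}=y$, if $u$ is adjacent in $G$ to, or equal to, some vertex $q_j$ of $Q$, then one of the two truncations $u,q_j,q_{j+1},\dots,y$ or $u,q_j,q_{j-1},\dots,x$ is again an even-connected path (the parity of $j$ selects which end, and exactly the $e$-edges of $Q$ are reused, so condition (3) of Definition \ref{even_connected} is inherited). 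This forces $x$ or $y$ into $N_{G'}[u]$, contradicting our assumption. For (B), taking $x=y$ and the self-even-connection witnessing $x^2 \in J$ shows a surviving $x$ is impossible, so $x$ is deleted and its duplicate $x'$, whose only neighbor is $x$, becomes isolated.

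The second key step, and the main obstacle, is to \emph{force} such an adjacency using that $G$ is gap-free and that $P\colon u=p_0,\dots,p_{2k+1}=v$ is a longest even-connected path with both endpoints outside $W$. The idea is to confront the edge $up_1$ of $P$ with the first and last edges of $Q$: gap-freeness yields an edge between $\{u,p_1\}$ and $\{x,q_1\}$ (respectively $\{q_{2m},y\}$), and the cases landing on $u$ give the desired adjacency, hence a contradiction, immediately. The delicate cases are those in which only $p_1$ (not $u$) meets $Q$; there I would splice $P$ with a suitable truncation of $Q$ at $p_1$ to produce an even-connected path with endpoints in $\{v,x,y\} \setminus W$ that is strictly longer than $P$, contradicting maximality. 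The technical heart is verifying that this spliced path respects the multiplicity bound in Definition \ref{even_connected}(3) — that reusing all $e$-edges of $P$ together with part of those of $Q$ does not overspend any $e_i$ — and that the junction at $p_1$ falls at a connector (odd-parity) position rather than an $e$-edge position. Matching these parities while preserving the multiplicity constraint is precisely the point requiring the care already present in the proof of \cite[Lemma 6.18]{Ba}; the presence of $W$ changes nothing except that ``longest'' is now taken among paths whose two endpoints avoid $W$.
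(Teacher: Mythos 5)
Your reduction to the core claim, and your first key step (the parity-selecting truncation showing that an adjacency between $u$ and a vertex $q_j$ of $Q$ even-connects $u$ to $x$ or to $y$), are both correct and consistent with the paper's strategy. The genuine gap is in your second key step: you apply gap-freeness to $\{u,p_1\}$ against the \emph{connector} edges $\{x,q_1\}$ and $\{q_{2m},y\}$ of $Q$, and this leaves a case that your proposed splicings cannot close, namely when the forced connections are $p_1x\in E(G)$ and $p_1y\in E(G)$ (with $u$ adjacent or equal to none of $x,q_1,q_{2m},y$, and $p_1$ adjacent to neither $q_1$ nor $q_{2m}$). In that situation the only parity-legal splice through $p_1$ is $v=p_{2k+1},p_{2k},\dots,p_1,x$: here $p_1$ sits at the even position $2k$, so $p_1x$ legitimately occupies a connector slot, but then $x=q_0$ sits at the odd position $2k+1$ and the path must terminate there, since the next edge $xq_1$ would occupy an $e$-edge slot while being merely a connector of $Q$. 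The resulting path has exactly the same length as $P$, so maximality gives no contradiction, and no even-connection of $u$ to $x$ or $y$ is produced either. The paper sidesteps this dead end by confronting the first \emph{$e$-edges} $p_1p_2$ and $q_1q_2$ of the two paths: every outcome of gap-freeness there (coincidence of endpoints, or a connecting edge) either even-connects $u$ to an endpoint of $Q$ directly, or yields a spliced path between two vertices of $\{v,x,y\}$, all outside $W$, of length $k+m>k$, contradicting maximality. The same repair works in your framework: confront $\{u,p_1\}$ with the $e$-edge $\{q_1,q_2\}$ (and symmetrically $\{q_{2m-1},q_{2m}\}$) rather than with the boundary connectors; landing on $q_1$ or $q_2$ then always feeds into an $e$-edge at the right parity.

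A secondary point you only gesture at but which must be made explicit: before any splicing you must dispose of the case where $P$ and $Q$ realize a common edge of $G$ among their $e$-edges, since otherwise a concatenation can overspend some $e_i$ and violate condition (3) of Definition \ref{even_connected}. The paper treats this first: if $p_{2i+1}p_{2i+2}=q_{2j+1}q_{2j+2}$ for some $i,j$, then combining the two paths at that shared edge already even-connects $u$ to $x$ or to $y$; in the remaining case the two multisets of $e$-edges are supported on disjoint edges of $G$, so the multiplicity bound for any concatenation is inherited automatically. This is the concrete content behind the ``care already present in \cite[Lemma 6.18]{Ba}'' that your outline defers.
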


\begin{proof}
By Theorem \ref{even_connec_equivalent}, $uv \in G' - W$. Consider any other edge $u'v' \in G' \setminus G$ with $u',v' \not\in W$. Then, there is an even-connected path $u' = q_0, \dots, q_{2l+1}=v'$ in $G$ with respect to $e_1 \dots e_{s-1}$ for some $1 \le l \le k$.

If there exist $i$ and $j$ such that $p_{2i+1}p_{2i+2}$ and $q_{2j+1}q_{2j+2}$ are the same edge in $G$ then by combining these two even-connected paths, either $u'$ or $v'$ will be even-connected to $u$. That is, either $u'$ or $v'$ will become an isolated vertex in $G'-W-N_{G'}[u]$. We may assume that the two even-connected path between $u,v$ and $u',v'$ do not share any edge.

Consider $p_1p_2$ and $q_1q_2$. Since these two edges do not form a gap in $G$, they must be connected. Let us now explore different possibilities for this connection.

If $p_1 \equiv q_1$ then $u$ and $v'$ are even-connected with respect to $e_1 \dots e_{s-1}$, and so $v'$ becomes an isolated vertex in $G'-W-N_{G'}[u]$. If $p_1 \equiv q_2$ (similarly for the case that $p_2 \equiv q_1$) then $u$ and $u'$ are even-connected with respect to $e_1 \dots e_{s-1}$, and so $u'$ becomes an isolated vertex in $G'-W-N_{G'}[u]$.

If $p_1q_1 \in E(G)$ then combining the two even-connected paths between $u,v$ and $u',v'$ and the edge $p_1q_1$, we get an even-connected path between $v$ and $v'$ that is of length $>k$, a contradiction. If $p_1q_2 \in E(G)$ (similarly for the case that $p_2q_1 \in E(G)$) then by combining the two even-connected paths between $u,v$ and $u',v'$ and the edge $p_1q_2$, we have an even connected path between $u'$ and $v$ that is of length $> k$, a contradiction.

Thus, in any case, either $u'$ or $v'$ will becomes an isolated vertex in $G'-W-N_{G'}[u]$. That is, any edge in $G' \setminus G$ will reduce to an isolated vertex in $G'-W-N_{G'}[u]$. The statement is proved.
\end{proof}

Our next main result establishes Conjecture $\text{A}'$ for gap-free graphs.

\begin{theorem}\label{thm.gaplocal}
Let $G$ be a graph with edge ideal $I = I(G)$ and let $r \ge 3$ be an integer. Assume that $G$ is gap-free and locally of regularity $\le r-1$. Then, for all $s \in \NN$, we have
$$\reg I^s \le 2s + r-2.$$
\end{theorem}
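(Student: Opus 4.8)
The plan is to deduce the global bound from a uniform bound on the colon ideals that appear in Banerjee's inductive inequality, Theorem~\ref{thm.inductive}. The key intermediate statement is the following claim: for every $s \ge 1$ and every collection of edges $e_1, \dots, e_{s-1}$ of $G$, the ideal $J = I^s : e_1 \cdots e_{s-1}$ satisfies $\reg J \le r$. Granting the claim, I would prove the theorem by induction on $s$. The base case $s=1$ reads $\reg I \le r$, which holds by \cite[Proposition 4.9]{CHHKTT} since $G$ is locally of regularity $\le r-1$. For the inductive step, Theorem~\ref{thm.inductive} gives
$$\reg I^{s+1} \le \max\{\reg(I^{s+1}:m_l) + 2s,\ \reg I^s\},$$
where the $m_l$ range over the minimal generators of $I^s$. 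Each $m_l$ is a product of $s$ edges, so the claim yields $\reg(I^{s+1}:m_l) \le r$, and together with the induction hypothesis $\reg I^s \le 2s+r-2$ we obtain $\reg I^{s+1} \le \max\{2s+r,\ 2s+r-2\} = 2(s+1)+r-2$, as desired.

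To prove the claim, fix $s$ and the edges $e_1, \dots, e_{s-1}$, and let $G'$ be the graph associated to the polarization of $J$, so that $\reg J = \reg I(G')$ because polarization preserves regularity. By Theorem~\ref{even_connec_equivalent}, the edges of $G'$ are the edges of $G$ together with the even-connection edges with respect to $e_1 \cdots e_{s-1}$. I would bound $\reg I(G')$ by peeling off vertices one at a time using Lemma~\ref{exact}, maintaining a set $W \subseteq V(G')$ of removed vertices, initially $W = \emptyset$. As long as $G'-W$ contains an even-connection edge with both endpoints outside $W$, choose an even-connected path $u = p_0, \dots, p_{2k+1} = v$ in $G$ of longest possible length subject to $u, v \notin W$, and apply Lemma~\ref{exact} to $I(G'-W)$ with the variable $u$:
$$\reg I(G'-W) \le \max\{\reg\big(I(G'-W):u\big) + 1,\ \reg I(G'-W-u)\}.$$
Here Lemma~\ref{lem.gaplocal} (which applies since $G$ is gap-free) is exactly what controls the colon: it shows that $G'-W-N_{G'}[u]$ is an induced subgraph of $G-N_G[u]$ with some isolated vertices adjoined, so that, using Lemma~\ref{lem.indsubgraph} and the local hypothesis applied to the genuine vertex $u \in V(G)$,
$$\reg\big(I(G'-W):u\big) = \reg I(G'-W-N_{G'}[u]) \le \reg I(G-N_G[u]) = \reg(I(G):u) \le r-1.$$
Thus the colon term contributes at most $r$, and I replace $W$ by $W \cup \{u\}$ and repeat.

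This loop terminates since $|W|$ strictly increases and is bounded by $|V(G')|$. When it stops, $G'-W$ has no even-connection edge with both endpoints outside $W$; as every edge of $G'-W$ has both endpoints outside $W$ by definition, all of its edges lie in $G$, so $G'-W$ is an induced subgraph of $G$ together with isolated vertices. Hence $\reg I(G'-W) \le \reg I(G) \le r$ by Lemma~\ref{lem.indsubgraph} and \cite[Proposition 4.9]{CHHKTT}. Unwinding the chain of inequalities from the peeling then gives $\reg I(G') \le r$, which is the claim.

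I expect the bulk of the difficulty to be already absorbed into Lemma~\ref{lem.gaplocal}; the remaining obstacles are careful bookkeeping. The most delicate point is the interaction with polarization: self-even-connections $u^2$ produce auxiliary vertices in $G'$, and one must check that whenever such a vertex survives in the final $G'-W$ its unique neighbor has already been placed in $W$, so that it is genuinely isolated and the final graph is an induced subgraph of $G$ up to isolated vertices. One must also verify that the longest-path hypothesis of Lemma~\ref{lem.gaplocal} is available at every stage, i.e.\ that the endpoints $u,v$ are honest vertices of $G$ and that such a path exists precisely when the loop has not yet terminated. These points are routine but should be spelled out explicitly.
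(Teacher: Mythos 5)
Your proposal is correct and follows essentially the same route as the paper: reduce via Theorem~\ref{thm.inductive} to bounding $\reg(I^s:e_1\cdots e_{s-1})$ by $r$, then peel off endpoints of longest even-connected paths one at a time using Lemma~\ref{exact}, with Lemma~\ref{lem.gaplocal} controlling each colon term by $\reg(I(G):u)\le r-1$, until only an induced subgraph of $G$ (plus isolated vertices) remains. Your explicit attention to the bookkeeping around polarization and self-even-connections is a point the paper glosses over, but the argument is the same.
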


\begin{proof} By \cite[Proposition 4.9]{CHHKTT}, we have $\reg I \le r$. By Theorem \ref{thm.inductive}, it suffices to show that for any collection of edges $e_1, \dots, e_{s-1}$ (not necessarily distinct) in $G$, we have
$$\reg (I^s : e_1 \dots e_{s-1}) \le r.$$

Let $G'$ be the graph associated to the polarization of $J = I^s : e_1 \dots e_{s-1}$. It follows from Lemma \ref{exact} that, for any vertex $x \in G'$,
\begin{align}
\reg G' \leq \max \{\reg(G'-N_{G'}[x])+1, \reg(G'-x)\}. \label{eq.sesG'}
\end{align}
Thus, we shall show that $\reg (G'-x) \le r$ and $\reg (G'-N_{G'}[x]) \le r-1.$

Let $u$ and $v$ be even-connected in $G$ with respect to $e_1 \dots e_{s-1}$ such that the even-connected path $u=p_0,\dots,p_{2k_1+1}=v$ is of maximum possible length. By Lemma \ref{lem.gaplocal}, $G'-N_{G'}[u]$ is obtained by adding isolated vertices to an induced subgraph of $G - N_G[u]$. Thus, by Lemma \ref{lem.indsubgraph}, we have $\reg (G'-N_{G'}[u]) \le \reg (G - N_G[u]) \le r-1.$

It remains to consider $\reg (G'-u)$. Let $u'$ and $v'$ be even-connected in $G$ with respect to $e_1 \dots e_{s-1}$ such that $u',v' \in G'-u$ and there is an even-connected path $u' = q_0, \dots, q_{2l+1}=v'$ in $G$ with respect to $e_1 \dots e_{s-1}$ such that $l$ is the maximum possible length. By using Lemma \ref{lem.gaplocal} again, we can deduce that $\reg (G'-u-N_{G'}[u']) \le \reg (G-N_G[u']) \le r-1$. Thus, by applying (\ref{eq.sesG'}) to the graph $G'-u$, it suffices to show that $\reg (G' - \{u,u'\}) \le r$.

We can continue in this fashion until all edges in $G' \setminus G$ are examined, i.e., we obtain a collection $W \subseteq V(G)$ such that $G'-W = G-W$, and reduce the problem to showing that $\reg (G' - W) = \reg (G - W) \le r$. This is obviously true by Lemma \ref{lem.indsubgraph} and the fact that $\reg G \le r$. The theorem is proved.
\end{proof}

We shall now shift our attention to Conjecture B. We begin by an improved statement of \cite[Corollary 6.5]{CHHKTT}.

\begin{lemma}\label{lem.locallinear}
Let $G$ be a gap-free and cricket-free graph. Then $G$ is locally linear.
\end{lemma}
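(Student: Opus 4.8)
The plan is to translate the local-regularity condition into a purely combinatorial statement about co-chordality, and then to rule out the forbidden induced anticycles by playing the gap-free and cricket-free hypotheses against each other. Throughout I work, as is standard for edge ideals, with graphs having no isolated vertices, so that every vertex has at least one neighbor; this is genuinely needed, since for an isolated vertex $x$ one has $I(G):x = I(G)$, and (for instance) a $C_5$ together with an extra isolated point is gap-free and cricket-free but not locally linear.

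First I would record the colon computation. For any vertex $x$ the minimal monomial generators of $I(G):x$ are the variables $y\in N_G(x)$ together with the quadratic generators of $I(G-N_G[x])$, so that
$$I(G):x = \big(y \mid y\in N_G(x)\big) + I(G-N_G[x]).$$
Since the linear forms $N_G(x)$ do not occur in $I(G-N_G[x])$, quotienting them out does not change regularity, and one gets $\reg(I(G):x)\le 2$ if and only if $\reg I(G-N_G[x])\le 2$ (when $G-N_G[x]$ has no edge this is automatic). By Fr\"oberg's theorem (Theorem \ref{thm.regtwo}) this is equivalent to asking that $G-N_G[x]$ be co-chordal for every $x$, i.e.\ that $G-N_G[x]$ contain no induced anticycle $C_n^c$ with $n\ge 4$.

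Now fix $x$ and suppose for contradiction that $H:=G-N_G[x]$ contains an induced anticycle $C_n^c$ on vertices $v_1,\dots,v_n$, the non-edges of the anticycle being exactly the consecutive pairs $v_iv_{i+1}$ (indices mod $n$). As an induced subgraph of the gap-free graph $G$, the graph $H$ is gap-free; since $C_4^c$ is a pair of disjoint edges, i.e.\ a gap, we must have $n\ge 5$. Choose a neighbor $w$ of $x$; then $w\notin\{v_1,\dots,v_n\}$ and $x$ is adjacent to none of the $v_i$, because the $v_i$ lie outside $N_G[x]$. The crucial use of gap-freeness is that for every edge $v_iv_j$ of the anticycle the disjoint edges $xw$ and $v_iv_j$ cannot form a gap, and since $x\not\sim v_i,v_j$ we must have $w\sim v_i$ or $w\sim v_j$. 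Hence the set of $v_i$ non-adjacent to $w$ is independent in $C_n^c$, i.e.\ a clique of $C_n$, so it is empty, a single vertex, or two consecutive vertices. Because $n\ge 5$, the neighbors of $w$ therefore contain a cyclic run of at least three consecutive vertices $v_a,v_{a+1},v_{a+2}$. In the anticycle $v_av_{a+2}$ is an edge (the pair is non-consecutive) while $v_{a+1}$ is adjacent to neither $v_a$ nor $v_{a+2}$. Thus the induced subgraph on $\{w,v_a,v_{a+2},v_{a+1},x\}$ has edge set exactly $\{wv_a,\,wv_{a+2},\,v_av_{a+2},\,wv_{a+1},\,wx\}$, a triangle $wv_av_{a+2}$ with two leaves $v_{a+1}$ and $x$ hanging from $w$ — precisely an induced cricket, contradicting cricket-freeness. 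Hence no such anticycle exists, $G-N_G[x]$ is co-chordal for every $x$, and $G$ is locally linear.

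I expect the main obstacle to be organizing the anticycle argument uniformly in $n$: the short case $n=5$ (where $C_5^c=C_5$) is easy to do by hand, but the real content is realizing that gap-freeness constrains the whole non-neighborhood of $w$ at once, forcing at most two consecutive misses, so that the \emph{same} three-consecutive-neighbors construction works for every $n\ge 5$ and one never has to treat the prism $C_6^c$ or longer anticycles separately. A secondary subtlety is pinning down the exact role of the distinguished vertex $x$: gap-freeness applied inside the anticycle alone only manufactures a triangle with a single pendant (a paw), and it is precisely that $x$ is adjacent to $w$ yet to none of the $v_i$ which supplies the second leaf needed to upgrade this to a genuine cricket — this is where the cricket-free (rather than merely claw-free) hypothesis is exactly matched to the available combinatorics.
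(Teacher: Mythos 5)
Your proof is correct and follows essentially the same route as the paper: reduce local linearity to co-chordality of $G - N_G[x]$ via Fr\"oberg's theorem, then play the gap-freeness of the pair $\{x,w\}$ against the anticycle's edges to manufacture an induced cricket centered at $w$ with pendants $x$ and a middle anticycle vertex. Your packaging --- that the non-neighbors of $w$ on the anticycle form a clique of $C_n$, hence at most two consecutive vertices, so three consecutive anticycle vertices are all adjacent to $w$ --- is a cleaner, uniform version of the paper's case analysis on which of $w_1, w_3$ and then $w_2, w_n, w_{n-1}$ are adjacent to $y$, and your remark about isolated vertices correctly identifies why the paper's opening reduction is genuinely needed.
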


\begin{proof} We may assume that $G$ contains no isolated vertices. By Theorem \ref{thm.regtwo}, it suffices to show that $(G \setminus N_G[x])^c$ is chordal for any vertex $x$ in $G$. Note that since $G \setminus N_G[x]$ is an induced subgraph of $G$, it is gap-free and cannot have any induced anticycle of length 4.

Suppose that $W = \{w_1,w_2,\dots,w_n\}$ is such that $G[W]$ is an anticycle of length $n \ge 5$ in $G \setminus N_G[x].$ Clearly, $W \cap N_G[x] = \emptyset$. Let $y$ be a neighbor of $x$. Since $G$ is gap-free, $\{x,y\}$ and $\{w_1,w_3\}$ cannot form a gap. Thus, these edges must be connected in $G$. That is, either $\{y,w_1\}$ or $\{y,w_3\}$ (or both) must be an edge in $G$.

Suppose that $\{y,w_1\}$ and $\{y,w_3\}$ are both edges in $G.$ Then, by considering edges  $\{x,y\}$ and $\{w_2,w_n\}$ in $G,$ either $\{y,w_2\}$ or $\{y,w_n\}$ must be an edge in $G.$ If $\{y,w_2\}$ is an edge, then the induced subgraph on $\{x,y,w_1,w_2,w_3\}$ is a cricket in $G,$ a contradiction. Otherwise, $\{y,w_n\} \in E(G)$. Since $\{x,y\}$ and $\{w_2,w_{n-1}\}$ cannot form a gap in $G,$ we must have $\{y,w_{n-1}\} \in E(G)$. Thus, the induced subgraph on $\{x,y,w_1,w_{n-1},w_n\}$ is a cricket in $G,$ a contradiction.

If  $\{y,w_1\} \in E(G)$ and $\{y,w_3\} \not\in E(G)$ (similarly for the case $\{y,w_1\} \not\in E(G)$ and $\{y,w_3\} \in E(G)$), then $\{y,w_n\}$ must be an edge in $G$; otherwise, $\{x,y\}$ and $\{w_3,w_n\}$ form a gap in $G.$ By considering $\{x,y\}$ and $\{w_2,w_{n-1}\}$, either $\{y,w_2\}$ or $\{y,w_{n-1}\}$ must be an edge in $G.$  If $\{y,w_2\} \in E(G)$, then the induced subgraph on $\{x,y,w_1,w_2,w_n\}$ is a cricket in $G,$ a contradiction. Otherwise, $\{y,w_{n-1}\} \in E(G)$, and the induced subgraph on $\{x,y,w_1,w_{n-1},w_n\}$ is a cricket in $G,$ a contradiction.
\end{proof}

\begin{example} There are examples for locally linear gap-free graphs for which the regularity could be either 2 or 3 (see Figure \ref{fig:gapfree}).
\begin{figure}[h]
  \includegraphics[width=0.7\linewidth]{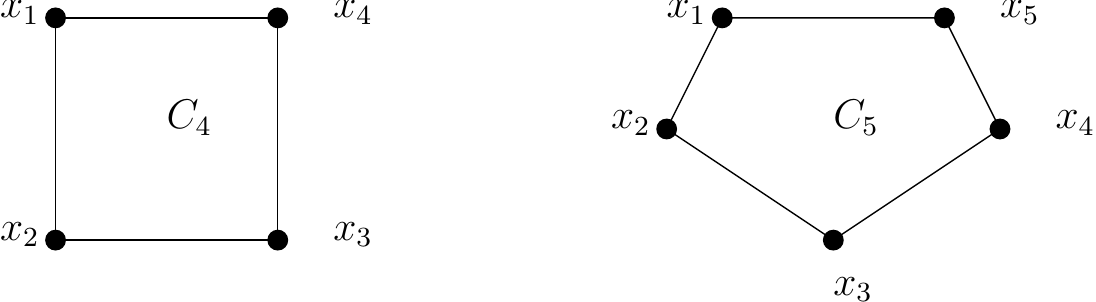}
  \caption{Locally linear gap-free graphs with regularity 2 and 3 (respectively)}
  \label{fig:gapfree}
\end{figure}

On the other hand, note that if $G$ is not gap-free, then $\nu(G)\geq 2 \implies \reg I(G) \geq 3.$ Thus, if, in addition, $I(G)$ is locally linear, then we have $\reg I(G)= 3$ by \cite[Proposition 4.9]{CHHKTT}. Figure \ref{fig:notgapfree} depicts such a graph.

\begin{figure}[h]
  \includegraphics[width=0.5\linewidth]{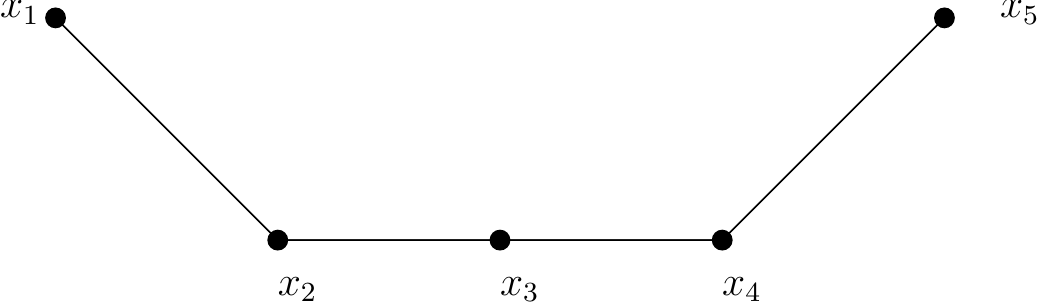}
  \caption{A graph that is not gap-free but locally linear with regularity 3}
  \label{fig:notgapfree}
\end{figure}

\end{example}


We are now ready to state our main result toward Conjecture B. In this result, we establish the conclusion of Conjecture B replacing the condition that $\reg I(G) = 3$ by the condition that $G$ is locally linear.

\begin{theorem}\label{thm.gaplocallin}
If $G$ is a graph with edge ideal $I = I(G)$. Suppose that $G$ is gap-free and locally linear. Then, for all $s \ge 2$, we have
$$\reg I^s = 2s.$$
\end{theorem}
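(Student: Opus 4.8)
The plan is to prove the two inequalities $\reg I^s \ge 2s$ and $\reg I^s \le 2s$ separately, the latter by induction on $s$. For the lower bound, gap-freeness gives $\nu(G) = 1$, so (\ref{eq.genlowerbound}) yields $\reg I^s \ge 2s + \nu(G) - 1 = 2s$ (equivalently, $I^s$ is generated in degree $2s$). Thus the content of the theorem is the upper bound $\reg I^s \le 2s$ for $s \ge 2$.

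For the upper bound I would use Banerjee's inequality (Theorem \ref{thm.inductive}). Since $G$ is locally linear, \cite[Proposition 4.9]{CHHKTT} gives $\reg I \le 3$. Applying Theorem \ref{thm.inductive} and writing the relevant colon ideals as $J = I^s : e_1 \cdots e_{s-1}$, where $e_1 \cdots e_{s-1}$ runs over the minimal generators of $I^{s-1}$ (a product of $s-1$ edges, of degree $2(s-1)$), we obtain $\reg I^s \le \max\{\reg J + 2(s-1),\ \reg I^{s-1}\}$. Here $\reg I^{s-1} \le 2s$ always holds --- by the inductive hypothesis $\reg I^{s-1} \le 2(s-1)$ when $s \ge 3$, and because $\reg I \le 3 \le 4$ when $s = 2$ --- so the entire theorem reduces to the key claim that, for all $s \ge 2$ and all choices of edges $e_1, \dots, e_{s-1}$, one has $\reg J \le 2$.

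To prove the key claim, recall from Theorem \ref{even_connec_equivalent} that $J$ is generated in degree $2$, so its polarization is the edge ideal of a graph $G'$ with $\reg J = \reg I(G')$ (polarization preserves regularity). By Fröberg's theorem (Theorem \ref{thm.regtwo}), $\reg J \le 2$ is equivalent to $G'$ being co-chordal, i.e.\ to $G'$ having no induced anticycle of length $\ge 4$. A length-$4$ anticycle is exactly a gap, so this case is handled by showing $G'$ is gap-free: a gap consisting of two old edges (edges of $G$) is already a gap in $G$, since $G$ is a subgraph of $G'$ and non-adjacency in $G'$ forces non-adjacency in $G$, contradicting gap-freeness; and a gap involving an even-connected edge is excluded by splicing even-connected paths, as in the proof of Lemma \ref{lem.gaplocal}. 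It remains to rule out induced anticycles $\overline{C_n}$ with $n \ge 5$. The tool here is Lemma \ref{lem.gaplocal}: taking $u$ to be the endpoint of a longest even-connected path and $W = \emptyset$, the graph $G' - N_{G'}[u]$ is obtained by adding isolated vertices to an induced subgraph of $G - N_G[u]$, which is co-chordal because $G$ is locally linear; hence $G' - N_{G'}[u]$ is co-chordal.

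The hard part is excluding the long anticycles, and gap-freeness gives no help there: $\overline{C_n}$ is itself gap-free for every $n \ge 5$ (its complement $C_n$ contains no induced $C_4$). The example $G = C_5$ is instructive --- here even-connection adds a single chord that turns the non-co-chordal $C_5$ into a co-chordal graph --- and it pinpoints the real issue: one must show that even-connection always introduces enough chords to destroy every induced anticycle of length $\ge 5$. I would argue by contradiction, taking a putative induced $\overline{C_n}$ with $n \ge 5$, classifying each of its edges as old or even-connected, and using gap-freeness of $G$, the maximality of the longest even-connected path, and the local co-chordality of the graphs $G - N_G[x]$ to produce either a chord across the anticycle or an even-connected path exceeding the maximal length, in each case a contradiction. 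This case analysis --- a refinement of the cricket-free argument of Lemma \ref{lem.locallinear} carried out inside the even-connection graph $G'$ --- is where I expect the main difficulty to lie.
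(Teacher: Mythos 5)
Your reduction is exactly the paper's: the lower bound from gap-freeness (or simply from $I^s$ being generated in degree $2s$), Banerjee's inequality to reduce everything to the claim that $J = I^s : e_1\cdots e_{s-1}$ has regularity at most $2$, Fr\"oberg's theorem to restate this as co-chordality of the associated graph $G'$, and gap-freeness of $G'$ to dispose of induced anticycles of length $4$. But the proof stops where the theorem actually lives. For induced anticycles $\overline{C_n}$ with $n \ge 5$ you offer only a plan (``produce either a chord \dots or an even-connected path exceeding the maximal length''), and you say yourself that this is where you expect the main difficulty to lie. That is the genuine gap: no argument is given that excludes these anticycles, and your one concrete tool for this case --- Lemma \ref{lem.gaplocal} with $W = \emptyset$, giving co-chordality of $G' - N_{G'}[u]$ for a single well-chosen vertex $u$ --- does not rule out an anticycle that meets $N_{G'}[u]$, which is the only kind left to worry about.

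For comparison, the paper closes this case differently from what you sketch. It first invokes \cite[Lemma 6.15]{Ba}, which says that an induced anticycle of $G'$ on at least five vertices is already an induced anticycle of $G$ on the same vertex set; so there is no need to classify the anticycle's edges as ``old or even-connected'' --- they are all old, and no new chords appear. The case analysis is then run not over the edges of the anticycle but over the position of the single edge $e_1 = ab$ relative to the anticycle's vertex set $W$: if $a,b \in W$, or exactly one of them is in $W$, gap-freeness of $G$ forces an adjacency that creates an even-connection $\{w_i, w_{i+1}\} \in E(G')$, contradicting inducedness; if $a, b \notin W$ and one of them has no neighbor in $W$, the anticycle survives in $G - N_G[a]$ or $G - N_G[b]$, contradicting local linearity (this is the only place locally linear is used); and if both are adjacent to $W$, a propagation argument with gap-freeness shows $a$ is adjacent to $w_1, \dots, w_{n-2}$, after which any neighbor of $b$ in $W$ again creates a forbidden even-connection. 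If you want to complete your write-up, you either need to prove the analogue of \cite[Lemma 6.15]{Ba} or supply the full case analysis you deferred; as it stands the central step is asserted, not proved.
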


\begin{proof} Again, by Theorem \ref{thm.inductive}, it suffices to show that for any collection of edges $e_1, \dots, e_{s-1}$ (not necessarily distinct), we have
$$\reg (I^s : e_1 \dots e_{s-1}) \le 2.$$
That is, the graph $G'$ associated to the ideal $J = I^s : e_1 \dots e_{s-1}$ is a co-chordal graph.

By \cite[Lemma 6.14]{Ba}, $G'$ is also gap-free, and so $G'$ does not contain an anticycle of length 4. Suppose that $W = \{w_1\ldots w_n\}$, for $n \ge 5$, is such that $G'[W]$ is an induced anticycle of $G'$. It follows from \cite[Lemma 6.15]{Ba} that $G[W]$ is an induced anticycle of $G$.

Let $e_1 = ab$. We shall consider different possibilities for the relative position of $a$ and $b$ with respect $W$.

If $a, b \in W$, say $a \equiv w_1$ and $b \equiv w_i$ (for $i \not= 1$), then since $\{w_1,w_2\}, \{w_1,w_n\} \not\in E(G')$, $b \not= w_2, w_n$. Consider the edges $\{a,b\}$ and $\{w_2,w_n\}$. These do not form a gap (and $a$ is not connected to neither $w_2$ nor $w_n$), and so either $\{b,w_2\} \in E(G)$ or $\{b,w_n\} \in E(G)$. If $\{b,w_2\} \in E(G)$ then $w_2$ and $w_3$ are even-connected with respect to $e_1 = ab$, which implies that $\{w_2,w_3\} \in E(G')$, a contradiction. If $\{b,w_n\} \in E(G)$ then $w_{n-1}$ and $w_n$ are even-connected with respect to $e_1 = ab$, which implies that $w_{n-1}w_n \in E(G')$, also a contradiction.

If $a \in W$, say $a = w_1$, and $b \not\in W$ (similar to the case where $a \not\in W$ and $b \in W$) then by considering the edges $\{a,b\}$ and $\{w_2,w_n\}$ again, the same arguments as above would lead to a contradiction.

If $a,b \not\in W$ and either $a$ or $b$ is not connected to any vertices in $W$, then $G'[W]$ (being also an anticycle in $G$) is an anticycle in either $G-N_{G}[a]$ or $G-N_{G}[b]$, which is a contradiction to the local linearity of $G$.

It remains to consider the case that $a,b \not\in W$, and both $a$ and $b$ are connected to $W$. Assume that $aw_1 \in E(G)$. Consider the pair of edges $\{a,b\}$ and $\{w_2,w_n\}$. If either $\{b,w_2\} \in E(G)$ or $\{b,w_n\} \in E(G)$ then, as before, we would have either $\{w_2,w_3\} \in E(G)$ or $\{w_{n-1},w_n\} \in E(G)$, which is a contradiction. Thus, we must have either $\{a,w_2\} \in E(G)$ or $\{a,w_n\} \in E(G)$. Without loss of generality, we may assume that $\{a,w_2\} \in E(G)$. We continue by considering the pair of edges $\{a,b\}$ and $\{w_3, w_n\}$. A similar argument shows that $\{a,w_3\} \in E(G)$. We can keep going in this fashion to get $\{a,w_i\} \in E(G)$ for all $i = 1, \dots, n-2$. Now, it can be seen that $b$ cannot be connected to any of the $w_i$ without creating an even-connection that gives $\{w_i,w_{i+1}\} \in E(G)$, for some $i$, which is a contradiction.

We have shown that such a collection of the vertices $W$ cannot exists. That is, $G'$ is a co-chordal graph. The theorem is proved.
\end{proof}

Theorem \ref{thm.gaplocallin} immediately recovers the following result of Banerjee \cite{Ba}.

\begin{corollary}[\protect{\cite[Theorem 6.7]{Ba}}] \label{cor.Ba}
Let $G$ be a gap-free and cricket-free graph. Then, for any $s \ge 2$, we have
$$\reg I(G)^s = 2s.$$
\end{corollary}

\begin{proof} The conclusion follows from Lemma \ref{lem.locallinear} and Theorem \ref{thm.gaplocallin}.
\end{proof}

\begin{example} Let $2K_2$ denote a gap and let $K_6$ denote the complete graph on 6 vertices. Let $G = 2K_2 + K_6$ be the \emph{join} of these two graphs (the join of two graphs $H$ and $K$ is obtained by taking the disjoint union of $H$ and $K$ and connecting each vertex in $H$ with every vertex in $K$). Then, it can be seen $G$ is locally linear but not gap-free. Particularly, it follows that $\reg I(G)^s \not= 2s$ for all $s \in \NN$. This gives an example of a locally linear graph $G$ for which $\reg I(G)^s \not= 2s$ for all $s \in \NN$.
\end{example}

\section{Regularity of Second Powers of Edge Ideals} \label{chapter5}

We end the paper with a flavor of Conjecture $\text{A}'$ when $s = 2$. We also take a look at the symbolic square of edge ideals.

\begin{theorem} \label{thm.square}
Let $G$ be a graph with edge ideal $I = I(G)$. Suppose that $G$ is locally of regularity at most $r-1$. Then, for any edge $e \in E(G)$, $\reg (I^2:e) \le r.$ Particularly, this implies that $\reg (I^2) \le r+2.$
\end{theorem}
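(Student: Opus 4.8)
Write $e = ab$ and set $J := I^2 : e$. The first step is to pin down the generators of $J$ using Theorem \ref{even_connec_equivalent}. For $s = 2$ the relevant product is the single monomial $m = ab$, so in any even-connected path $p_0 p_1 \cdots p_{2k+1}$ with respect to $ab$ the edge $ab$ may be used at most once; since $k \ge 1$, this forces $k = 1$ and $p_1 p_2 = ab$. Consequently the even-connected pairs are exactly the pairs $\{u, v\}$ with $u \in N_G(a)$ and $v \in N_G(b)$ (the case $u = v$ occurring when $u \in N_G(a) \cap N_G(b)$, which contributes the square $u^2$), and therefore
$$J = I(G) + (N_G(a))\,(N_G(b)),$$
where $(N_G(a))$ and $(N_G(b))$ denote the ideals generated by the corresponding variables. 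Let $G'$ be the simple graph obtained by polarizing $J$. Then $G'$ is $G$ together with one extra edge $uv$ for each $u \in N_G(a)$, $v \in N_G(b)$ with $u \ne v$, and one whisker attached at each common neighbor $u \in N_G(a) \cap N_G(b)$. Since polarization preserves regularity, it suffices to prove $\reg I(G') \le r$.

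The plan is to peel off the closed neighborhood $N_G[a]$ one vertex at a time by repeatedly applying Lemma \ref{exact}, arranging that every colon that appears is governed by the local hypothesis. The structural fact I would isolate first is this: each vertex $z \in N_G(a)$ is even-connected to every $v \in N_G(b)$ via the length-three path $z, a, b, v$, so in $G'$ the vertex $z$ is adjacent to all of $N_G(b)$; moreover $N_{G'}[a] = N_G[a]$, every extra edge of $G'$ has one endpoint in $N_G(a)$ and one in $N_G(b)$, and every whisker sits on a vertex of $N_G(a) \cap N_G(b) \subseteq N_G(b)$.

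Now order $N_G[a] = \{a = z_1, z_2, \dots, z_k\}$ and set $G'_0 = G'$, $G'_i = G'_{i-1} - z_i$. For each $i$, using the edge-ideal colon formula $I(G'_{i-1}) : z_i = (N_{G'_{i-1}}(z_i)) + I\big(G'_{i-1} - N_{G'_{i-1}}[z_i]\big)$, we get $\reg (I(G'_{i-1}) : z_i) = \reg I\big(G'_{i-1} - N_{G'_{i-1}}[z_i]\big)$. When $z_i = a$ the removed set contains $N_G(a)$, and when $z_i \in N_G(a)$ the removed set contains all of $N_G(b)$ (by the adjacency just noted, together with the vertices already deleted); in either case every extra edge and every whisker of $G'$ has been destroyed. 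Hence $G'_{i-1} - N_{G'_{i-1}}[z_i]$ is, up to isolated vertices, an induced subgraph of $G - N_G[z_i]$, so by Lemma \ref{lem.indsubgraph} and the local hypothesis $\reg I(G - N_G[z_i]) = \reg(I(G) : z_i) \le r - 1$ we obtain $\reg (I(G'_{i-1}) : z_i) \le r - 1$. Finally $G'_k = G' - N_G[a] = G - N_G[a]$, whose regularity is at most $r - 1$. Feeding these bounds into the chain of inequalities produced by Lemma \ref{exact},
$$\reg I(G') \le \max\{(r-1) + 1, \ \reg I(G'_1)\} \le \cdots \le \max\{r, \ \reg I(G - N_G[a])\} = r,$$
which is the desired bound $\reg(I^2 : e) \le r$.

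For the final assertion I would invoke Theorem \ref{thm.inductive} with $s = 1$: the minimal monomial generators of $I = I^1$ are precisely the edges of $G$, so $\reg I^2 \le \max\{\max_{e \in E(G)} \reg(I^2 : e) + 2, \ \reg I\} \le \max\{r + 2, r\} = r + 2$, using $\reg I \le r$ from \cite[Proposition 4.9]{CHHKTT}. The step I expect to be the crux is the verification inside the peeling that deleting the vertices of $N_G[a]$ wipes out all the even-connected edges at every stage — equivalently, that the interior vertices of $N_G[a]$ dominate $N_G(b)$ in $G'$ — since it is exactly this domination that keeps every colon ideal comparable to a local ideal $I(G) : z_i$, and hence bounded by $r - 1$.
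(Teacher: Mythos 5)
Your proof is correct, and its engine is the same as the paper's: the key computation in both is that for a vertex $x \in N_G(a)$ supporting a new even-connected edge, the colon $(I^2:e):x$ contains all the variables of $N_G(b)$ and therefore collapses to $(I:x)$ plus variables, which the local hypothesis bounds by $r-1$; the bound then propagates through repeated applications of Lemma \ref{exact}. Where you differ is the bookkeeping. The paper deletes one endpoint of a new edge at a time and runs induction on $|V(G)|$, so at each stage it must re-identify the remainder $(J,x)$ as $I(G\setminus x)^2:e$ --- a colon ideal of a power of a smaller graph --- and its terminal case is a graph with no even-connections, disposed of by $\reg I \le r$ from \cite[Proposition 4.9]{CHHKTT}. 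You instead fix the peeling set $N_G[a]$ in advance, stay entirely inside the single polarized graph $G'$, and terminate at $G - N_G[a]$ with the bound $r-1$; this avoids induction and the need to recompute even-connections in subgraphs, at the price of the domination claim (which you verify correctly) that every $z \in N_G(a)$ is adjacent in $G'$ to all of $N_G(b)\setminus\{z\}$, so that deleting $\{z_1,\dots,z_{i-1}\}\cup N_{G'_{i-1}}[z_i]$ always wipes out every new edge and whisker. Both routes are valid; yours is more self-contained, while the paper's inductive phrasing matches the machinery it uses for higher powers elsewhere. One cosmetic caveat: when $G'_{i-1} - N_{G'_{i-1}}[z_i]$ has no edges, the colon ideal is generated by variables and has regularity $1 \le r-1$ (since $r \ge 2$), so the identification $\reg(I(G'_{i-1}):z_i) = \reg I(G'_{i-1} - N_{G'_{i-1}}[z_i])$ should be read as an inequality against $r-1$ in that degenerate case; this does not affect the argument.
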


\begin{proof} The second statement follows from the first statement and Theorem \ref{thm.inductive}. To prove the first statement, we shall use induction on $|V(G)|$. Let $J = I^2 : e$ and let $G'$ be the graph associated to $J$.

If there are no even-connected vertices in $G$ with respect to $e$, then $I^2 : e = I$, and the conclusion follows from \cite[Proposition 4.9]{CHHKTT}.

If there are edges in $G'$ which are not initially in $G$, then these edges are of the form $xy$ where $x \in N(a), y \in N(b)$ or $xx'$ where $x \in N(a) \cap N(b)$ and $x'$ is a new whisker vertex.

Suppose that there exists at least one new edge of the form $xy$ for $x \not= y$. Observe that $J:x= I:x+(u ~|~ u \in N(b)).$ Thus $\reg (J:x) \leq \reg (I:x) \leq r-1.$ Furthermore, $(J,x)= I(G \setminus x)^2:e.$ Therefore, by induction on $|V(G)|,$ we have $\reg (J,x) \leq r.$ Hence, by Lemma \ref{exact}, we have $\reg J \leq r$.

Suppose that the only new edges are of the form $xx'$, where $x'$ is a new whisker vertex. Observe that, in this case,
\[ J:x= I:x +( u ~|~ u \in N(a)\cup N(b))+(u' ~|~ u' \text{ is a whisker in the new edges }) \]
\[(J,x)= I(G \setminus x)^2:e\]
Thus, we also have $\reg (J:x) \leq \reg (I:x) \leq r-1$ and $\reg (J,x)\leq r$ by induction. Hence, by Lemma \ref{exact} again, we have $\reg J \leq r.$ This completes the proof.
\end{proof}

Symbolic powers in general are much harder to handle than ordinary powers. The symbolic square of an edge ideal appears to be more tractable. We recall and rephrase a result from \cite{Su}.

\begin{theorem}[\protect{\cite[Corollary 3.12]{Su}}] \label{SecondSymb}
For any graph $G,$
\[ I(G)^{(2)}= I(G)^2+(x_ix_jx_k ~|~ \{x_i,x_j,x_k\} \text{ forms a triangle in } G). \]
\end{theorem}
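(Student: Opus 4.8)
The plan is to prove the identity by regarding both sides as monomial ideals and comparing them monomial by monomial, using the standard description of symbolic powers of a squarefree monomial ideal via its minimal primes. Recall that the minimal primes of $I = I(G)$ are precisely the ideals $P_C = (x_i \mid x_i \in C)$, where $C$ ranges over the minimal vertex covers of $G$, and that for such an ideal it is standard that $I^{(2)} = \bigcap_C P_C^2$. The first elementary fact I would record is that a monomial $m = \prod_i x_i^{a_i}$ lies in $P_C^2$ if and only if $\sum_{i \in C} a_i \ge 2$ (either two distinct variables of $C$ divide $m$, or one of them divides $m$ to the second power). Consequently, $m \in I^{(2)}$ if and only if \emph{every} minimal vertex cover $C$ satisfies $\sum_{i \in C} a_i \ge 2$. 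Writing $T = (x_ix_jx_k \mid \{x_i,x_j,x_k\} \text{ forms a triangle in } G)$, the goal reduces to showing $I^{(2)} = I^2 + T$.

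The inclusion $I^2 + T \subseteq I^{(2)}$ I expect to be routine. The containment $I^2 \subseteq I^{(2)}$ is the general fact that ordinary powers lie inside symbolic powers. For a triangle $\{x_i,x_j,x_k\}$, every vertex cover must contain at least two of its three vertices (a single vertex cannot cover all three edges of a triangle), so for each minimal cover $C$ one has $\sum_{l \in C} a_l \ge |C \cap \{i,j,k\}| \ge 2$, placing $x_ix_jx_k$ in $I^{(2)}$ by the criterion above.

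The hard part, and the real content, is the reverse inclusion $I^{(2)} \subseteq I^2 + T$, which I would establish in contrapositive form: if a monomial $m$ is divisible by no product of two edges (so $m \notin I^2$) and its support $S = \supp(m)$ contains no triangle (so $m \notin T$), then $m \notin I^{(2)}$; that is, there is a minimal vertex cover $C$ with $\sum_{i \in C} a_i \le 1$. The key reformulation of $m \notin I^2$ is that for every edge $uv$ dividing $m$, the support of $m/(uv)$ must be independent, since an edge inside that support would supply a second edge and force $m \in I^2$. I would fix an edge $uv$ of $G[S]$ and read off the forced local structure: all edges of $G[S]$ are incident to $u$ or $v$, and triangle-freeness prevents $u$ and $v$ from sharing a neighbor in $S$. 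If both $u$ and $v$ had a private neighbor in $S$, the two resulting pendant edges would be vertex-disjoint and together witness $m \in I^2$, a contradiction; hence, after possibly swapping $u$ and $v$, the vertex $v$ is a leaf at $u$ and $S \setminus \{u\}$ is independent. A short additional step handles the case $a_u \ge 2$, where the independence of $\supp(m/(uv))$ forces the unique edge of $G[S]$ to be $uv$ with $a_v = 1$.

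Finally, in each reduced configuration I would produce the cheap cover directly: extend an explicit independent set that meets $S$ in all but one low-multiplicity vertex to a maximal independent set $A$, and set $C = V(G) \setminus A$. This minimal vertex cover meets $S$ in a single vertex of exponent one, so $\sum_{i \in C} a_i = 1 < 2$, contradicting $m \in I^{(2)}$ and completing the inclusion. I anticipate the main obstacle to be bookkeeping rather than depth: keeping careful track of the multiplicities $a_u$ and $a_v$ across the subcases, and checking that the chosen independent set genuinely extends to a maximal independent set that still avoids the single designated vertex of $S$.
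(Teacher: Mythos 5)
Your proposal is correct, but note that the paper itself offers no proof of this statement: Theorem \ref{SecondSymb} is quoted verbatim from Sullivant \cite[Corollary 3.12]{Su}, so there is no internal argument to compare against. Sullivant obtains the result as a corollary of his general machinery on symbolic powers of combinatorially defined ideals (joins, secants, and the description of differential/symbolic powers of squarefree monomial ideals), whereas you give a direct, self-contained proof from the primary decomposition: $I^{(2)} = \bigcap_C P_C^2$ over minimal vertex covers $C$, the observation that a monomial $m = \prod_i x_i^{a_i}$ lies in $P_C^2$ iff $\sum_{i \in C} a_i \ge 2$, and a case analysis showing that if $m \notin I^2$ and $\supp(m)$ contains no triangle, then some maximal independent set $A$ can be chosen so that $C = V(G) \setminus A$ is a minimal cover meeting $\supp(m)$ in a single exponent-one vertex. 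I checked the combinatorial core and it holds up: fixing an edge $uv$ of $G[\supp(m)]$, the independence of $\supp(m/(uv))$ does force every edge of $G[\supp(m)]$ to meet $\{u,v\}$; triangle-freeness rules out common neighbors; two private neighbors would produce disjoint edges $ua'$ and $vb'$ with $ua'vb' \mid m$, contradicting $m \notin I^2$; and your multiplicity bookkeeping ($a_u \ge 2$ forces the unique edge $uv$ with $a_v = 1$) is exactly what is needed so that in each terminal configuration the designated vertex ($u$ when $a_u = 1$, else $v$) has exponent one and is excluded from $A$ automatically, since its neighbor lies in $A$. Your elementary route buys transparency and independence from \cite{Su}; Sullivant's route buys generality (it handles higher symbolic powers and more general ideals uniformly).

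One small omission, easily repaired: your contrapositive argument opens by fixing an edge of $G[\supp(m)]$, which presupposes $m \in I$. You should dispose of the edgeless case first: if $\supp(m)$ is independent, extend it to a maximal independent set $A$, and $C = V(G) \setminus A$ is a minimal vertex cover with $\sum_{i \in C} a_i = 0 < 2$, so $m \notin I^{(2)}$, as desired. With that sentence added, the proof is complete.
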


The last result of our paper is stated as follows.

\begin{theorem}
Let $G$ be a graph with edge ideal $I = I(G)$. Suppose that $G$ is locally of regularity at most $r-1$. Then $\reg (I^{(2)}) \le r+2.$
\end{theorem}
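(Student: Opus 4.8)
The plan is to bound $\reg I^{(2)}$ by combining the structural description of $I^{(2)}$ from Theorem \ref{SecondSymb} with the bound on $\reg(I^2:e)$ already established in Theorem \ref{thm.square}. By Theorem \ref{SecondSymb}, we have the decomposition
$$I^{(2)} = I^2 + T, \quad \text{where } T = (x_ix_jx_k ~|~ \{x_i,x_j,x_k\} \text{ is a triangle in } G).$$
The idea is to mimic the proof strategy of Theorem \ref{thm.square}: rather than attacking $\reg I^{(2)}$ directly, I would apply Banerjee's inductive machinery (Theorem \ref{thm.inductive}) in an adapted form, or more simply use Lemma \ref{exact} repeatedly to peel off the cubic triangle generators. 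Since $T$ has only the extra generators coming from triangles and each such generator $x_ix_jx_k$ is divisible by the edge monomial $x_ix_j \in I$, the ideal $I^{(2)}$ sits between $I^2$ and $I^2 + \mm I$ in a controlled way.

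The cleanest route I would attempt first is to realize $I^{(2)}$ as a colon-compatible perturbation of $I^2$ and reduce to the colon ideals $(I^{(2)} : e)$ for edges $e \in E(G)$. For an edge $e = x_ix_j$, I would compute $(I^{(2)} : e) = (I^2 : e) + (T : e)$. The key observation is that $(T : x_ix_j)$ contributes only linear forms: any triangle generator $x_ix_jx_k$ containing the edge $x_ix_j$ colons down to the variable $x_k$, so these contribute generators of degree one. Thus $(I^{(2)} : e)$ is obtained from $(I^2 : e)$ by adding some variables, and adding variables can only decrease (or preserve) regularity by the standard fact (e.g. \cite[Corollary 3.2]{Herzog}) that quotienting by a variable not dividing a minimal generator does not increase regularity. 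Hence I expect $\reg(I^{(2)} : e) \le \reg(I^2 : e) \le r$, using Theorem \ref{thm.square} for the final inequality. Then applying Theorem \ref{thm.inductive} (the $s = 1 \to s = 2$ step, suitably interpreted for the symbolic square) or a direct short exact sequence argument via Lemma \ref{exact} would yield $\reg I^{(2)} \le r + 2$.

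Concretely, the closing argument runs parallel to the second statement of Theorem \ref{thm.square}: by Lemma \ref{exact} applied with $I^{(2)}$ and an edge variable, together with the bound on the colon, I would obtain
$$\reg I^{(2)} \le \max\{\reg(I^{(2)} : e) + 2, \reg(I^{(2)}, e)\}.$$
The first term is at most $r + 2$ by the colon bound. For the second term, quotienting by the variables appearing in a chosen edge reduces to a smaller graph, so I would set up an induction on $|V(G)|$ exactly as in Theorem \ref{thm.square}, checking that the inductive hypothesis applies because the relevant induced subgraph is again locally of regularity at most $r-1$ (this being an induced subgraph property, guaranteed by Lemma \ref{lem.indsubgraph}).

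The main obstacle I anticipate is the precise bookkeeping of the colon $(I^{(2)} : e)$ and, more subtly, ensuring that the inductive setup genuinely closes. The symbolic square does not enjoy the clean even-connection description of Theorem \ref{even_connec_equivalent}, so I cannot simply quote Banerjee's structure theorem; instead I must verify by hand that the only new generators of $(I^{(2)} : e)$ beyond those of $(I^2 : e)$ are linear, and that passing to $(I^{(2)}, e)$ corresponds to the symbolic square of a smaller edge ideal (the analogue of $(J, x) = I(G \setminus x)^2 : e$ from Theorem \ref{thm.square}). Confirming this identity $(I^{(2)}, x) = I(G \setminus x)^{(2)}$ — i.e.\ that deleting a vertex commutes appropriately with taking the symbolic square as described in Theorem \ref{SecondSymb} — is the delicate point, since one must track how triangles through the deleted vertex interact with the added variable. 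Once that identification is secured, the regularity bound follows by the same two-term Lemma \ref{exact} induction used throughout Section \ref{chapter5}.
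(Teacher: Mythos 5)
Your opening moves are the right ones and coincide with the paper's: decompose $I^{(2)} = I^2 + T$ via Theorem \ref{SecondSymb}, observe that coloning by an edge $e$ sends each triangle generator either to a variable (if the triangle contains $e$) or to an edge or to the triangle monomial itself (both of which already lie in $I \subseteq I^2 : e$), so that $(I^{(2)}:e) = (I^2:e) + (\text{variables})$ and hence $\reg(I^{(2)}:e) \le \reg(I^2:e) \le r$ by Theorem \ref{thm.square}. The genuine gap is in how you close the argument. Lemma \ref{exact} applied with the monomial $e = xy$ produces the term $\reg(I^{(2)}, xy)$, that is, $I^{(2)}$ plus the degree-two monomial $xy$ --- \emph{not} plus the variables $x$ and $y$. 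Adding the monomial $xy$ does not delete a vertex, so the identity you hope to use, $(I^{(2)}, x) = I(G \setminus x)^{(2)}$ (which concerns a variable, not an edge), never arises from this exact sequence, and the proposed induction on $|V(G)|$ does not get off the ground. Coloning by a single variable instead does not repair this: $(I^{(2)}:x)$ picks up the degree-three generators of $I^2:x$ and is not controlled by Theorem \ref{thm.square} or by the even-connection machinery.

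The way to close the loop --- and what the paper actually does --- is to exploit $I^{(2)} \subseteq I$ and peel off \emph{all} the edges rather than try to shrink the graph. Enumerate $E(G) = \{e_1, \dots, e_l\}$ and set $K_i = I^{(2)} + (e_1, \dots, e_i)$, so that $K_0 = I^{(2)}$ and $K_l = I$. The short exact sequences $0 \to \big(R/(K_i : e_{i+1})\big)(-2) \to R/K_i \to R/K_{i+1} \to 0$ yield $\reg I^{(2)} \le \max_i\{\reg(K_i : e_{i+1}) + 2, \reg I\}$. Each colon $K_i : e_{i+1}$ is again of the form $(I^2 : e_{i+1}) + (\text{variables})$: the triangle generators behave exactly as you computed, and each previously added edge $e_j$ colons either to a variable (if it meets $e_{i+1}$) or to $e_j \in I \subseteq I^2 : e_{i+1}$. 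Hence every colon has regularity at most $r$, while $\reg K_l = \reg I \le r$ by \cite[Proposition 4.9]{CHHKTT}, giving $\reg I^{(2)} \le r+2$. With this replacement for your final paragraph the argument is complete; your computation of the colon ideals is otherwise exactly the paper's.
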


\begin{proof}
We first note that, by Theorem \ref{SecondSymb}, $I^{(2)} \subseteq I$. Let $E(G)=\{e_1,\ldots, e_l\}$ and, for $0 \le i \le l$, define $$J_i=(I^{(2)} +e_1 \dots + e_i):(e_{i+1}) \text{ and } K_i=(I^{(2)} +e_1 \dots + e_i).$$

Observe that $K_l = I$, and for all $i$ we have the following short exact sequence.
\begin{eqnarray}
0 \longrightarrow \frac{R}{J_i} (-2) \longrightarrow \frac{R}{K_i} \longrightarrow \frac{R}{K_{i+1}} \longrightarrow 0
\end{eqnarray}

This, particularly, implies that $\displaystyle \reg (I^{(2)}) \leq \max_{1\leq i \leq l-1 }\{\reg (J_i)+2, \reg I\}.$ It  follows from Theorem \ref{SecondSymb} that $$J_i= I^2:e_{i+1}+ (x_ix_jx_k:e_{i+1} ~|~ \{x_i,x_j,x_k\}  \text{ forms a triangle in } G).$$ Note that if $e$ is an edge in the triangle $\{x_i,x_j,x_k\},$ then $(x_ix_jx_k:e)$ is a variable. If $e$ shares a vertex with the triangle, then the colon ideal is generated by an edge and $(x_ix_jx_k:e) \in I.$ If $e$ and $\{x_i,x_j,x_k\}$ have no common vertices, then $(x_ix_jx_k:e) = x_ix_jx_k \in I.$ Then, by  Theorem \ref{even_connec_equivalent} we have $J_i= I^2:e_{i+1}+ (\text{variables})$ and hence, $\reg J_i \le \reg (I^2:e)$. The conclusion now follows from Theorem \ref{thm.square} and the use of \cite[Proposition 4.9]{CHHKTT}.
\end{proof}

\end{document}